\newcommand*\rot{\rotatebox{90}}
\newcommand*\circled[1]{\tikz[baseline=(char.base)]{
            \node[shape=circle,draw,inner sep=2pt] (char) {#1};}}
\newcommand*\squared[1]{\tikz[baseline=(char.base)]{
            \node[shape=rectangle,draw,inner sep=2pt] (char) {#1};}}
\def\vs{\vskip.0cm}
\def\br{\mathbb R}
\def\bz{\mathbb Z}
\def\bc{\mathbb C}
\def\wt{\widetilde}
\def\id{\text{\rm Id\,}}
\def\ve{\varepsilon}
\def\cV{\mathcal V}
\def\noi{\noindent}
\def\ve{\varepsilon}
\def\maGdeg{\mathcal G\text{\rm -deg}}
\newtheorem{theorem}{Theorem}[section]
\newtheorem{proposition}[theorem]{Proposition}
\newtheorem{definition}[theorem]{Definition}
\newtheorem{remark}[theorem]{Remark}
\newtheorem{remark-definition}[theorem]{Remark and Definition}
\definecolor{mygreen}{rgb}{0,.66,.05}
\definecolor{lightyellow}{rgb}{1,1,.80}
\begin{document}
\title{Hopf Bifurcation of Relative Periodic Solutions:\\ Case Study of a Ring of Passively Mode-Locked Lasers}
\author{\bf Zalman Balanov,  Pavel Kravetc, Wieslaw Krawcewicz, \bf  Dmitrii Rachinskii\\
{\small Department of Mathematical Sciences, The University of Texas at Dallas,  Richardson, 75080 USA}}

\date{}

\maketitle

\begin{abstract}
In this paper, we consider an equivariant Hopf bifurcation of relative periodic solutions from relative equilibria in systems of functional differential equations respecting $\Gamma \times S^1$-spatial symmetries.
The existence of branches of relative periodic solutions together with their symmetric classification is established
using  the equivariant twisted $\Gamma\times S^1$-degree with 
one free parameter. As a case study, we consider a delay differential model of coupled identical passively mode-locked semiconductor lasers with the dihedral symmetry group $\Gamma=D_8$.
% arising as a coupling symmetry.
\end{abstract}

{\bf Keywords:} Functional differential equation, Symmetric coupling, Equivariant Hopf bifurcation, Equivariant degree method, Passively mode-locked laser

\medskip

{\bf MSC Classification:} 34K18; 34K18; 37N20  

\section{Introduction}

A natural counterpart of an equilibrium state in dynamical systems with continuous symmetries
%, such as $S^1$-symmetry, 
is a {\it relative equilibrium}, {\it i.e.} an equilibrium modulo the group action. Similarly, a counterpart of a periodic solution is a {\it relative periodic solution}. 
In particular, in $S^1$-symmetric systems, the {\it $S^1$-equivariant Hopf bifurcation} is responsible for branching of relative periodic solutions from a relative equilibrium. This scenario is analogous to the classical Hopf bifurcation of periodic solutions from an equilibrium state in generic systems (without symmetry).

Relative equilibrium states and relative periodic motions are well-known for conservative systems related to rigid bodies \cite{MarsdenRatiu}, deformable bodies \cite{Chandrasekhar}, molecular vibrations \cite{KozinRobertsTennyson},  celestial mechanics \cite{Brouke,MeyerHall} and vortex theory\cite{PekarskyMarsden} 
(see also \cite{WulffRoberts,Golubitsky-Stewart} and references therein).
In addition to $S^1$-symmetry, many of such systems respect a finite group $\Gamma$ of spatial symmetries such as, for example, a symmetry of coupling of atoms in molecules \cite{MontaldiRoberts}. 
This naturally leads to the problem of classification of relative equilibria/periodic solutions according to their symmetric properties (spatio-temporal symmetries) represented by a subgroup $H$  of the group $\Gamma\times S^1$ for relative equilibria and the group  $\Gamma\times S^1\times S^1$ for relative periodic solutions, respectively, where the second copy of $S^1$ is associated with time periodicity. Examples include dynamics of a deformable body in an ideal irrotational fluid \cite{WulffRoberts},  symmetric celestial motions, for instance, central configurations  \cite{Meyer,RybickiPerez}, etc. On the other hand, there is a long list of applications described by {\it non-conservative} systems of ODEs admitting relative equilibria/relative periodic solutions (see, for example, 
\cite{Golubitsky-Stewart}, where the Couette-Taylor experiment is discussed in detail). 

In his pioneering work \cite{Krupa} (cf.  \cite{Field1}), M. Krupa proposed a general method for analysis of the bifurcation of relative periodic solutions from a relative equilibrium for systems of ordinary differential equations (in general, non-Hamiltonian). This elegant method reduces the problem to the analysis of a generic (non-symmetric) Hopf bifurcation for an explicit differential equation on the normal slice to the relative equilibrium. An extension of Krupa's method to the case of more complicated spatial symmetries (including 
$\Gamma \times S^1$) has been developed in \cite{Vanderbauwhede} (see also \cite{ChossatLauterbach} for the moving frames method and \cite{Lamb} for the hierarchy of secondary bifurcations).
Essentially, the analysis of the Hopf bifurcation includes two main problems: (i) finding the bifurcation points and establishing the occurrence of the bifurcation (in equivariant setting, this problem additionally requires to describe symmetric properties of the bifurcating solutions), and (ii) analysis of  stability properties of the bifurcating solutions. Krupa's method allows one to solve both problems for $S^1$-equivariant (or $\Gamma\times S^1$-equivariant, if an additional group $\Gamma$ of spatial symmetries is involved) ordinary differential systems using the center manifold reduction and the analysis of the normal forms of the bifurcation for the system on the normal slice.

The applicability of Krupa's method to $S^1$-symmetric functional differential equations (FDEs) appears to be problematic.
In particular, it is unclear whether it is possible to reduce the problem in question to the analysis of a non-equivariant FDE
on some kind of a normal slice. However, if the problem is limited to establishing the occurrence
of the bifurcation (problem (i) above), then alternative methods are available.
In the present paper, we address this more modest problem. In this case, one can
%In %particular, 
%this case, one can 
%and 
reduce the analysis to studying a fixed point problem for a $\Gamma\times S^1 \times S^1$-equivariant operator equation in a functional space of periodic functions.
For the latter problem, 
one can adapt either equivariant Lyapunov-Schmidt reduction techniques or $S^1$-equivariant topological methods (which is the case in this paper).

More specifically, we %adopt the latter approach and 
obtain conditions for the occurrence of the Hopf bifurcation of relative periodic solutions (together with their complete symmetric classification) from a relative equilibrium in general $\Gamma\times S^1$-equivariant systems of FDEs using the method based on twisted equivariant degree with one free parameter. For a systematic exposition of this method, we refer to \cite{AED,BK-H,survey,BKR,Ize-Vignoli,Ize, Wu}.
As is well-known, this method is insensitive to violations of genericity assumptions \cite{Fied3, HBKR} (these assumptions include the 
simplicity of purely imaginary eigenvalues at the bifurcation point, transversality 
of the eigenvalue crossing, and non-resonance conditions). Our results are formulated for a general FDE system, which respects a group $\Gamma\times S^1$ of spatial symmetries with an arbitrary finite group $\Gamma$, 
and include the method of classification of symmetries of the relative periodic solutions based on the linearization of the problem.
In the second part of the paper, these results are applied for the analysis of delay differential equations (DDE) describing a system
of 8 mode-locked lasers coupled in $D_8$-symmetric fashion.
The rate equations of semiconductor lasers are $S^1$-symmetric
(see, for example, \cite{genlaser1, genlaser2, genlaser3})
and can naturally include delays, the classical example being the Lang-Kabayashi model where the light is re-injected into the laser cavity by an external mirror \cite{LK, Yanchuk-Sieber}.

Semiconductor mode locked lasers are compact reliable low-cost devices that emit short optical pulses at high repetition rates suitable for applications in telecommunications, machining, probing, all optical systems, etc. \cite{chapter}. We use a delay differential model of a passively mode-locked laser proposed 
	%by Vladimirov and Turaev 
	in \cite{VT}. This delay differential system has been extensively 
	%used for modeling rich dynamics of such lasers. In particular, the model has been 
	applied to analyze instabilities \cite{instab,instability, instability1,pulseinter} and hysteresis \cite{hysteresis, hysteresis1} in mode-locked lasers, optically injected lasers \cite{injection, injection1}, hybrid mode locking \cite{hybrid}, noise reduction \cite{noise}, % noise1}, 
	resonance to delayed feedback \cite{amann}, and Fourier domain mode locking \cite{fourier}. An increasing interest in small systems of symmetrically coupled lasers and large synchronized laser arrays  %stimulate 
	motivates the analysis of the corresponding variants of the model \cite{ls1,ls2,ls3,ls5}. %ls4

In the mode-locking regime, the laser emits a periodic sequence of light pulses with the period close to the cold cavity round trip time, which equals to the delay in the model. A typical bifurcation scenario associated with formation of this regime is the Hopf bifurcation of a relative equilibrium (continuous wave solution) from the ``laser off'' equilibrium followed by the $S^1$-equivariant Hopf bifurcation of a relative periodic solution from the relative equilibrium with the increase of the bifurcation parameter (pump current). As the bifurcation parameter increases further,
the relative periodic solution continuously transforms to acquire a pulsating shape. This transformation is simultaneous with a sequence of secondary Hopf bifurcations from the equilibrium and relative equilibrium solutins. 

The paper is organized as follows. In the next short section, some equivariant jargon and notation are introduced.
In Section 3, we first classify symmetries of branches of relative equilibria, which bifurcate from
a $\Gamma\times S^1$-fixed equilibrium of an equivariant FDE. Then, the main theorem on classification of symmetries of relative periodic solutions bifurcating from the branches of relative equilibria (Theorem \ref{t1})
is presented and proved. In Section 4, the results of Section 3 are applied to delay rate equations of the $D_8\times S^1$-symmetric laser system. 
We prove the occurrence of infinitely many branches of relative equilibria with various symmetries from the laser off state.
Then the analytic method is combined with numerical computations 
to analyze symmetric properties of relative periodic solutions that branch from the relative equilibrium states.
A short appendix (Section 5) lists a few twisted subgroups, which are used in Section 4 to describe symmetries of solutions. %The paper ends with conclusions.

\section{Equivariant Jargon}
%\paragraph{\bf Equivariant jargon.} 
Consider a compact Lie group  $G$. In what follows we will always assume that all considered in this paper subgroups $H\subset G$ are closed and denote by $N(H)$ the normalizer of $H$ in $G$, by $W(H)=N(H)/H$ the Weyl group of $H$ in $G$, and by $(H)$
the conjugacy class of $H$ in $G$. We will denote by  $\Phi(G)$ the set of all conjugacy classes of subgroups  in $G$. Clearly, $\Phi(G)$  admits a partial order defined by:
\[
(H) \leq (K)\quad \Leftrightarrow\quad \exists_{g\in G}\;\; gHg^{-1} \subset K.
\]
We also put $\Phi_n(G) := \{(H) \in \Phi(G): \; \dim W(H) = n\}$.
\vs

Let $X$ be a $G$-space and $x\in X$. We denote by  $G_x := \{g \in G: \; gx = x\}$ the {\it isotropy} (or {\it stabilizer})  of $x$, by $G(x):= \{gx :\; g \in G\} \simeq G/G_x$ the {\it orbit} of $x$, and by $X/G$ the orbit space of $X$. The conjugacy class $(G_x)$ will be called the {\it orbit type} of $x$. We will also adopt the following notation:
\begin{align*}
\Phi(G;X) &:=\{(H) \in \Phi(G) : \; H = G_x \; \text{for some} \; x \in X\},\\
\Phi_n(G;X)&:= \Phi_n(G) \cap \Phi(G;X),\\
X_H &:= \{x \in X: \; G_x = H \},\\
X^H &:= \{x \in X: \; G_x \supset H\},\\
X_{(H)} &:= \{x \in X: \; (G_x) = (H)\},\\
X^{(H)} &:= \{x \in X: \; (G_x) \geq (H)\}.
\end{align*}
One can easily verify that $W(H)$ acts on $X^H$ and this action is free on $X_H$.
\vs
For two $G$-spaces  $X$ and $Y$, a continuous map $f : X \to Y$ is said to be {\it equivariant} if $f(gx) = gf(x)$
for all $x \in X$ and $g \in G$. If the $G$-action on $Y$ is trivial, then $f$ is called {\it invariant}. Clearly, for any subgroup $H \subset G$ and equivariant map $f : X \to Y$, the map $f^H : X^H \to Y^H$, with $f^H:=f|_{X^H}$, is
well-defined and $W(H)$-equivariant.
\vs
Finally, given two orthogonal $G$-representations $W$ and $V$ and an open bounded subset
$\Omega \subset W$, a continuous map $f : W \to V$ is called {\it $\Omega$-admissible} if
$f(x)\not=0$ for all $x \in \partial \Omega$ (in such a case we will also say that $(f,\Omega  )$ is an {\it admissible pair}). Similarly, two $\Omega  $-admissible maps $f$ and $g$ are called $\Omega ${\it -admissible homotopic} if
there exists a continuous map $h:[0,1] \times W \to V$ (called $\Omega$-{\it admissible homotopy} between $f$ and $g$) such that: (i) $h(0,x) = f(x)$, $h(1,x) =g(x)$ for all $x\in W$, and (ii) the map $h_t(x) := h(t,x)$ is $\Omega$-admissible for all $t\in [0,1]$. We denote by $\mathcal M(W,V)$ the set of all admissible pairs
$(f,\Omega)$.
\vs
For further details of the equivariant jargon used in this paper,
we refer to \cite{tD,Kawa,Bre,AED}.

\section{$\Gamma\times S^1$-Symmetric Systems of FDEs}\label{sec:FDEs}
\subsection{Notation and statement of the problem}
 \noi Assume that $\Gamma$ is a finite group and
let $V:=\mathbb R^n$ be an orthogonal $\Gamma\times S^1$-representation such that the $S^1$-action on $V$ is given by the homomorphism  $T:S^1\to O(n)$. Assume that  $J$ is the infinitesimal operator of the subgroup $T(S^1)\subset O(n)$, i.e.
\[
J= \lim_{\tau\to 0} \frac1\tau \Big[T(e^{i\tau})-\text{Id}   \big].
\]
The action of $S^1$ on $V$ satisfies for all $e^{i\tau}\in S^1$
\[
\forall_{v\in V}\;\;\; e^{i\tau} v = e^{\tau J}(v),
\]
and we also have $Je^{\tau J}=e^{\tau J}J$.
\vs
We will denote $\mathcal G:=\Gamma\times S^1$ and use the notation
\begin{align*}
\bold \Gamma :=\Gamma\times \{1\} \quad \text{ and }\quad \bold S :=\{e\}\times S^1,
\end{align*}
where $e\in \Gamma$ is the neutral element.
\vs
Let
\begin{equation}\label{isot}
V=V_0\oplus V_1\oplus \dots \oplus V_m
\end{equation}
be the $\bold S$-isotypical decomposition of $V$, where $V_k$ is modeled on the $S^1$-irreducible representation $\mathcal V_k\simeq \bc$ with the $S^1$-action given by $e^{i\tau}z:=e^{ik\tau}\cdot z$, where `$\cdot$' stands for the complex multiplication.
Then, each of the components $V_k$, $k>0$, has a natural complex structure such that for $v\in V_k$
\[
e^{i\tau}v=e^{ik\tau}\cdot v, \quad Jv = i k \cdot v.
\]
%where `$\cdot$' denotes the complex multiplication. 
Also, for $v\in V_0$, we have $Jv=0$.
\vs
Let $r>0$ and denote by $C_{-r}(V)$ the Banach space
\[C([-r,0];V):=\big\{ x: \text{ where } x: [-r,0]\to V \; \text{ is a continuous function}\big\},\]
equipped with the norm  $\|x\|_\infty:=\sup\{|x(\theta)|: \theta\in [-r,0]\}$. Clearly, $C_{-r}(V)$ is an isometric $\Gamma\times S^1$-representation, with the action given for by
\[
\forall_{\theta\in [-r,0]}\;\;((\gamma,e^{i\tau})x)(\theta)=e^{\tau J}(\gamma x(\theta)),\;\; x\in C_{-r}(V), \;\;  (\gamma,e^{i\tau})\in \Gamma\times S^1.
\]
In addition, we have the following $\bold S$-isotypical decomposition of  $C_{-r}(V):$
\[
C_{-r}(V)=\bigoplus_{k=0}^m C_{-r}(V_k),
\]
where each of the components $C_{-r}(V_k)$ with $k>0$ has a natural complex structure induced from $V_k$.
\vs
For a continuous function $x:\br\to V$ and $t\in \br$, let $x_t : [-r,0]\to V$ 
be a function defined by
\[ x_t(\theta):=x(t+\theta),\quad \theta \in [-r,0].
\]
We make the following assumption:
\begin{itemize}
\item[(A0)]
 $f:\mathbb R\times C_{-r}(V)\to V$ is  a $C^1$-differentiable $\mathcal G$-equivariant functional, i.e. for $(\gamma,e^{i\tau})\in \mathcal G$
\begin{equation}\label{eq:action}
f\Big(\alpha, (\gamma, e^{\tau i})x\Big)=e^{\tau J}\gamma f(\alpha, x)\quad \text{ for all }\; x\in C_{-r}(V).
\end{equation}
\end{itemize}

\subsection{Symmetric bifurcation of relative equilibria from an equilibrium}\label{subsec-relative-equib}
Consider the  parametrized system of FDEs
\begin{equation}\label{eq:fde1}
\dot x(t) =f(\alpha,x_t), \qquad x(t)\in V.
\end{equation}
In what follows, we will study bifurcations of continuous branches of periodic/quasi-periodic solutions to \eqref{eq:fde1} of special type and describe their symmetric properties. 

In this subsection, we are interested in 
periodic  solutions to \eqref{eq:fde1}  of the type
\begin{equation}\label{eq:part-sol}
x(t)=e^{w J t}x \quad \text{ for some } \;\; x\in V\;\text{ and }\; w\in\br.
\end{equation}

\paragraph{\bf Relative equilibria.} 
By substituting \eqref{eq:part-sol} into equation \eqref{eq:fde1}, we obtain
\begin{equation}\label{eq:step1}
w e^{wt J}Jx(t) = f(\alpha, e^{w(t+\cdot )J}x).
\end{equation}
Then, using the equivariance condition \eqref{eq:action}, we can  rewrite \eqref{eq:step1}  as
\begin{equation}\label{eq:step1'}
w Jx = f(\alpha, e^{w J\cdot}x),\quad x\in V.
\end{equation}

Take the orthogonal $\bold S$-invariant decomposition \eqref{isot} of the space $V$, where $V_0=V^{\bold S}$, and denote
\[
V_*:=V_0^\perp=V_1\oplus\dots\oplus V_m.
\]
For a fixed $\lambda =u+i w\in \mathbb{C}$, define the linear operator $\xi(\lambda):V\to C_{-r}(V)$ by
\begin{equation}\label{eq:xi1}
\Big(\xi(\lambda)x\Big)(\theta)=e^{u\theta} e^{wJ\theta} x_*+x_o, \quad \theta\in [-r,0],
\end{equation}
where $x=x_*+x_o$, $x_*\in V_*$ and $x_o\in V_0$, and consider the                                        
function $\wt f:\br\times \bc\times V\to V$ defined by
\begin{equation}\label{d}
\wt f(\alpha,\lambda,x):= f(\alpha, \xi(\lambda)x).
\end{equation}
With this notation, equation \eqref{eq:step1'} can be written as
\begin{equation}\label{eq:fde-reduced}
w Jx = \wt f(\alpha, i w, x), \quad x\in V.
\end{equation}
%%%%%%%%%%%%%
Furthermore, assumption (A0) implies $\mathcal G$-equivariance of $\wt f$:
\[
\wt f\Big(\alpha, \lambda, (\gamma, e^{\tau i})x\Big)=e^{\tau J}\gamma \wt f(\alpha,\lambda, x)\quad \text{ for all }\; x\in V, \ \lambda\in\mathbb{C}.
\]
Hence, solutions to \eqref{eq:fde-reduced} come in $\bold S$-orbits. It is clear that any $\bold S$-orbit, which is a solution to
\eqref{eq:fde-reduced}, is flow invariant for \eqref{eq:fde1} and therefore, satisfies the standard definition of relative equilibrium (see, for example,
\cite{GS}). In what follows, we refine this concept to the setting relevant to our discussion.
\vs

Assume that $x \in V^{\bold S}=V_0$. Then, one has
$\wt f(\alpha,iw, x)=\wt f(\alpha,0,  x)$ for all $w$. Hence, any  solution $x \in V^{\bold S}$ of \eqref{eq:fde-reduced} is an equilibrium for equation \eqref{eq:fde1} with $\bold S(x)  =  \{x\}$.
On the other hand, solutions of \eqref{eq:fde-reduced} that satisfy $x \not\in V^{\bold S}$ form one-dimensional orbits.
\vs

\begin{definition}\label{defrel} \rm
    Suppose that \eqref{eq:fde-reduced} holds for some $\alpha_o, w_o\in \mathbb R$ and $x_o\not \in V^{\bold S}$. Then, the
     orbit $\bold S (x_o)$ is a one-dimensional curve in $V$ called a {\it relative equilibrium} of equation \eqref{eq:fde1}.
    \begin{itemize}
    \item[(i)] For $w_o\ne 0$, this orbit is a trajectory of time-periodic solutions
       $x(\cdot)=e^{(w_o \cdot+\tau)J}x_o$, $e^{i\tau} \in S^1$,
      to equation \eqref{eq:fde1}
      called a {\it rotating wave}.

      \item[(ii)]  For $w_o=0$, the relative equilibrium consists of equilibrium points  $e^{\tau J}x_o$, $e^{i\tau} \in S^1$, of \eqref{eq:fde1} (the so-called {\it frozen wave}).
      \end{itemize}
\end{definition}

\paragraph{\bf Characteristic quasi-polynomials.}
Let $\alpha_o \in \br$
be given and  let $x_o \in V^{\mathcal G} $ be an equilibrium for \eqref{eq:fde1}.
We will also call the pair $(\alpha_o,x_o)$ an {\it equilibrium}, or a {\it stationary solution}, in this case.
\vs

Let us consider the bifurcation of relative equilibria from this equilibrium.
Let  
\begin{equation}\label{eq:linearization-matrix}
{D}_{\bold x} f(\alpha,{\bold x}): \br\times C_{-r}(V)\to  V
\end{equation} denote the derivative
of the functional $f$ with respect to ${\bold x}\in C_{-r}(V)$.
If $x_o\in V_0$, then the Jacobi matrix $D_x \wt f(\alpha, \lambda ,x_o): V\to V$ , which is given by
\begin{equation}\label{eq:matrix-linearizartion}
D_x \wt f(\alpha, \lambda ,x_o)=D_{\bold x}f(\alpha,\xi(\lambda)x_o)\xi(\lambda)
\end{equation}
is $\bold S$-equivariant. Therefore, the subspaces $V_0$ and $V_*$ are  $\bold S$-invariant for this matrix.
Consider the restrictions
$D_x \wt f(\alpha,\lambda,x_o)|_{V_0}$ and $D_x \wt f(\alpha,\lambda,x_o)|_{V_*}$ 
and define the {\it characteristic quasi-polynomials} for $x_o\in V_0$ and $\lambda \in \bc$:
\begin{align}\label{eq:quasipolynomials}
\begin{split}
\mathcal P_{0}(\alpha,\lambda, x_o)&:={\rm det}_{\bc}\, \left( D_x \wt f(\alpha,\lambda,x_o)|_{V_0}-\lambda\, \id \right),\\
\mathcal P_{*} (\alpha,\lambda, x_o)&:={\rm det}_{\bc}\, \left( D_x \wt f(\alpha,\lambda,x_o)|_{V_*}-\lambda\, \id \right).
\end{split}
\end{align}

%%%%%%%%%%%%%%
\vs
We make the following assumption.

\begin{itemize}

\item[(A1)]
At the equilibrium point $(\alpha_o,x_o)$, where $x_o\in V^{\mathcal G}$, the characteristic quasi-polynomial ${\mathcal P}_{0}(\alpha_o,\cdot, x_o)$ has no zero roots,
i.e. ${\mathcal P}_{0}(\alpha_o,0, x_o)\ne 0$.
\end{itemize}
\vs
By assumption (A1) and implicit function theorem, there exists a $C^1$-function $x:(\alpha_o-\ve,\alpha_o+\ve)\to V_0$ (for some $\ve>0$) such that $x(\alpha_o)=x_o$ and $\{(\alpha,x(\alpha)): \alpha\in (\alpha_o-\ve,\alpha_o+\ve)\}$ 
is a curve of equilibrium points for \eqref{eq:fde1}. By equivariance of $\wt f$, and due to $x_o\in V^{\mathcal G}$, it follows that $x(\alpha)\in V^{\mathcal G}$,
consequently the $2$-manifold $M\subset \br^2\times V^{\mathcal G}$ given by
\begin{equation}\label{eq:trivial-HB}
M:=\{(\alpha,w,x(\alpha)): \alpha\in (\alpha_o-\ve,\alpha_o+\ve),\; w\in \br\}
\end{equation}
 is composed of  solutions to \eqref{eq:fde-reduced}, which can be called {\it trivial}.
 \vs

The roots of the the quasi-polynomials are called the {\it characteristic roots} for $(\alpha_o,x_o)$. Assume that:
\vs
\begin{itemize}
\item[(A2)]
The quasi-polynomial ${\mathcal P}_{*}(\alpha_o,\cdot, x_o)$ has a characteristic root $\lambda=i w_o$ for some $w_o\in\mathbb R$
at the equilibrium point $(\alpha_o,x_o)$, but for any other equilibrium $(\alpha,x)$ from a neighborhood of $(\alpha_o,x_o)$ in $\mathbb{R}\times V_0$,  the 
corresponding characteristic polynomial 
has no roots of the form $\lambda=i w$, $w\in \mathbb R$.

\end{itemize}

 In order to find {\it nontrivial} solutions to \eqref{eq:fde-reduced} bifurcating from $M$, consider the equation
 \begin{equation}\label{eq:bif-HB}
 \Phi(\alpha,w,x):= \wt f(\alpha, i w,x)-wJx=0
 \end{equation}
 as a $\mathcal G$-symmetric bifurcation problem with two free parameters $\alpha$ and $w$.

By applying the standard terminology (see \cite{AED}),  if  $D_x\wt f(\alpha,i w,x)-wJ:V\to V$ is not an isomorphism, we call the point $(\alpha,w,x)\in M$ an $M${\it -singular} point of $\Phi$.
By the implicit function theorem, a necessary condition for a point $(\alpha',w',x')\in M$ to be a bifurcation point for equation \eqref{eq:fde-reduced} is that it is an $M$-singular point.
 Assumption (A2) implies that the point $(\alpha_o,w_o,x_o)$ satisfies this necessary condition.

\begin{remark}\label{rem-isolated-center}
{\rm Due to equivariance of $f$, the linearization of the delayed system \eqref{eq:fde1}
at the equilibrium point $(\alpha,x(\alpha))$ has two flow invariant subspaces
$C_{-r}(V_0)$ and $C_{-r}(V_*)$. Assumption (A1) means that
$\lambda=0$ is not an eigenvalue for the restriction of the linearization to
$C_{-r}(V_0)$. Assumption (A2) means that the restriction of the linearization
to $C_{-r}(V_{\ast})$ has a pair of eigenvalues $\lambda=\pm i w_o$ for $\alpha=\alpha_o$
and has no eigenvalues of the form $i w$, $w\in\mathbb{R}$, for $\alpha\ne\alpha_o$ sufficiently close to $\alpha_o$.}
\end{remark}

\paragraph{\bf Sufficient condition for bifurcation of relative equilibria.}
In order to provide a sufficient condition for the bifurcation of relative equilibria from
the point  $(\alpha_o,w_o,x_o)$ and  an equivariant topological classification of the bifurcating branches, we apply the twisted $ \mathcal G$-equivariant degree with one free parameter (for more details, see \cite{AED}). To be more precise,
consider the $\mathcal G$-isotypical decomposition of $V$ (see \eqref{isot}):
\begin{equation}\label{isot-Gamma-S1}
V = V_0 \oplus V_{\ast} =  \bigoplus_{i} V_{i}^0 \oplus \bigoplus_{j,k} V_{j,k} \quad(i = 0,...,r;\; j=0,...,s; \;\; k = 1,...,m),
\end{equation}
where  $V_{j,k}$ is the isotypical component modeled on the irreducible $\mathcal G$-representation $\mathcal V_{j,k}$ and 
$V_{i}^0$ can be identified with the $\bold \Gamma$-re\-pre\-sen\-tation
modeled on an irreducible  $\bold \Gamma$-representation $\mathcal V_i$.

\begin{remark}\label{rem-maximal-twisted-isot-comp}
\rm
Let $(H_o)$ be a maximal twisted orbit type in $V$. Then, $(H_0)$ is also a maximal twisted orbit type for 
some $V_{j_o,k_o}$ in \eqref{isot-Gamma-S1}, $k_o > 0$. In fact, if $U$ is a direct sum of two $\mathcal G$-representations $U_1$ and $U_2$, then
$\mathcal G_{(x,y)} = \mathcal G_x \cap \mathcal G_y$ for any $(x,y) \in U$, $x \in U_1$, $y \in U_2$.  

\end{remark}

\vs

For any  $j=0,...,s$ and $k = 1,...,m$, put
\[
\mathcal P_{j,k} (\alpha,\lambda):={\rm det}_{\mathbb C}\, \left( D_x \wt f(\alpha,\lambda,x(\alpha))|_{V_{j,k}}-\lambda\, \id \right),\qquad \lambda\in \mathbb C.
\]
Notice that the characteristic equation at $(\alpha,x(\alpha))$ can be written as
\begin{equation}\label{eq:char-1}
\mathcal P_* (\alpha,\lambda) :=   \mathcal P_* (\alpha,\lambda,x(\alpha)) =   \prod_{k>0}\prod_{j=0}^s \mathcal P_{j,k} (\alpha,\lambda)=0.
\end{equation}
This implies that $\lambda$ is a characteristic root for $(\alpha,x(\alpha))$ if it is a root of $P_{j,k} (\alpha,\lambda)=0$ for some $k>0$ and $j\ge 0$.
\vs

To formulate our first bifurcation result, we need two additional concepts. 
Observe that using (A2), one can choose a small neighborhood 
$Q$  of the point $iw_o$ in the right half-plane ${\rm Re}\,\lambda>0$
of $\bc$ and a sufficiently small real $\delta=\delta(Q)>0$ such that,
as $\alpha$ varies over the interval $|\alpha-\alpha_o|\le\delta$, the roots $\lambda(\alpha)$ of $\mathcal P_{j,k}(\alpha,\cdot)$ can only leave $Q$ through the `exit' at the point $iw_o$ and only when $\alpha=\alpha_o$. 
\begin{definition}\label{isotyp-crossing-number} 
{\rm Define  the $V_{j,k}$-isotypical {\it crossing number} at $(\alpha_o,w_o)$ by the formula
\begin{equation}\label{crossingnumber}
\mathfrak t_{j,k}(\alpha_o,w_o):={\mathfrak t}^-_{j,k}(\alpha_o,w_o)-{\mathfrak t}^+_{j,k}(\alpha_o,w_o),
\end{equation}
where ${\mathfrak t}^-_{j,k}(\alpha_o,w_o)$ is the number of roots $\lambda(\alpha)$ of $\mathcal P_{j,k}(\alpha,\cdot)$ (counted according to their $\mathcal V_{j,k}$-isotypical multiplicity) in the set $Q$ for $\alpha<\alpha_o$, and ${\mathfrak t}^+_{j,k}(\alpha_o,w_o)$ is the number of roots of
$\mathcal P_{j,k}(\alpha,\cdot)$ in $Q$ for $\alpha>\alpha_o$.}
\end{definition}

\begin{definition} \label{defbranch} \rm
A set $K$ of solutions $(\alpha, w,x)$ to equation \eqref{eq:fde-reduced}
is called a {\it continuous branch of relative equilibria} bifurcating from the equilibrium $(\alpha_o,x_o)$
of equation \eqref{eq:fde1} if:
\begin{itemize}
\item[(i)] $x\not\in V^{\bold  S}$ for all $(\alpha, w,x)\in K$;

\item[(ii)] $\overline K$ contains a connected component $K_o$ such that $K_o\cap M\ne \emptyset$ (cf. \eqref{eq:trivial-HB});

\item[(iii)] For any $\varepsilon>0$ there is a $\delta>0$ such that if $(\alpha, w,x)\in K\cap K_o$
and $\|x\|<\delta$, then $|\alpha-\alpha_o|<\varepsilon$ and $|w-w_o|<\varepsilon$.
\end{itemize}
\end{definition}
\vs
A sufficient condition for the bifurcation of relative equilibria from the equilibrium $(\alpha_o,x_o)$, which provides an estimate for the number of possible branches of relative equilibria
with their symmetric properties, can be formulated as follows. 
\vs

\begin{proposition}\label{proposition}\label{prop-bifurcation-relative-equib}
Given system \eqref{eq:fde1}, assume conditions (A0)--(A2) are satisfied.  Let $(\mathcal H_o)$ be a maximal twisted orbit type in $V$.
Take decomposition \eqref{isot-Gamma-S1} and 
denote by $\mathfrak M$ the set of all $\mathcal G$-isotypical components 
in which $(\mathcal H_o)$ is an orbit type (cf. Remark \ref{rem-maximal-twisted-isot-comp}). Assume there exists $V_{j_o,k_o} \in \mathfrak M$ such that:

(i) $(\mathcal H_o)$ is a maximal twisted type in $V_{j_o,k_o} $;

(ii) $\mathfrak t_{j_o,k_o}(\alpha_o,w_o) \not=0 $ (cf. \eqref{crossingnumber});

(iii) ${\mathfrak t}_{j,k}(\alpha_o,w_o) \cdot  {\mathfrak t}_{j^{\prime},k^{\prime}}(\alpha_o,w_o) \geq 0$ \quad for all $V_{j,k}, V_{j^{\prime},k^{\prime}} \in \mathfrak M.$

\smallskip
\noindent
Then, there exist at least $|\mathcal G / \mathcal H_o|_{\bold S}$ continuous branches of relative equilibria of equation \eqref{eq:fde1}
bifurcating from the equilibrium $(\alpha_o,x_o)$ with the minimal symmetry $(\mathcal H_o)$ (here $|\cdot |_{\bold S}$ stands for the number of $\bold S$-orbits).
\end{proposition}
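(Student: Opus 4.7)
My plan is to cast equation \eqref{eq:bif-HB} as a $\mathcal G$-equivariant bifurcation problem on $V_*$ and apply the twisted $\mathcal G$-equivariant degree with one free parameter (see \cite{AED}). After translating $x_o$ to the origin, assumption (A1) and the implicit function theorem guarantee that in a neighborhood of the trivial manifold $M$ in $\br^2\times V_0$ all zeros of $\Phi$ lie on $M$ itself; consequently, every nontrivial zero has a nonzero $V_*$-component. Restricting attention to the reduced map $\Phi_*(\alpha,w,y) = \wt f(\alpha,iw,x(\alpha)+y) - wJ(x(\alpha)+y)$ with $y \in V_*$, one has $\Phi_*(\alpha,w,0) = 0$ along $M$ and $D_y \Phi_*(\alpha_o,w_o,0) = D_x\wt f(\alpha_o,iw_o,x_o)|_{V_*} - w_o J|_{V_*}$.

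Next, I would use (A2) to choose a small rectangle $I\times J \subset \br^2$ around $(\alpha_o,w_o)$ and a radius $\rho>0$ such that $\Phi_*$ is $\Omega$-admissible on the boundary of $\Omega := I\times J\times B_\rho(0) \subset \br^2 \times V_*$: the only singularities of the linearization along $M \cap \bar\Omega$ occur at $(\alpha_o,w_o,0)$, and a standard contradiction argument rules out nontrivial zeros on $\partial B_\rho$ after shrinking $\rho$. Then the twisted degree $\maGdeg(\Phi_*,\Omega) = \sum_{(H)} n_H (H)$ is well-defined. Via a linearization homotopy followed by a suspension that absorbs the auxiliary frequency direction $w$ into the standard Hopf-type normal form on each $\mathcal{G}$-isotypical piece, one obtains the splitting formula
\begin{equation*}
n_H \;=\; \sum_{V_{j,k}\in\mathfrak M} \mathfrak t_{j,k}(\alpha_o,w_o)\cdot c^{H}_{j,k},
\end{equation*}
where $c^{H}_{j,k}$ is the coefficient at $(H)$ in the basic twisted $\mathcal G$-degree associated with the irreducible $\mathcal G$-representation $\mathcal V_{j,k}$.

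I would then show $n_{H_o}\ne 0$. By hypothesis (i), $(\mathcal H_o)$ is maximal twisted in $V_{j_o,k_o}$, so $c^{\mathcal H_o}_{j_o,k_o}\ne 0$ (maximal twisted orbit types always have nonvanishing basic-degree coefficient, cf.\ \cite{AED}); by (ii) the corresponding crossing number is nonzero; and by (iii) together with sign-coherence of $c^{\mathcal H_o}_{j,k}$ across the components in $\mathfrak M$ (which is built into the structure of the basic degrees on Hopf-type $\mathcal G$-representations), all summands have the same sign, preventing cancellation. Nonvanishing of $n_{\mathcal H_o}$ and the standard existence principle of the twisted equivariant degree then produce at least $|\mathcal G/\mathcal H_o|_{\bold S}$ distinct continuous branches of nontrivial zeros of $\Phi_*$ with minimal orbit type $(\mathcal H_o)$ emanating from $(\alpha_o,w_o,x_o)$, which are precisely the asserted branches of relative equilibria of \eqref{eq:fde1} in the sense of Definitions \ref{defrel} and \ref{defbranch}.

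The main obstacle is the middle step: justifying the splitting formula for $n_H$, in particular handling the two-parameter nature of the problem (the pair $(\alpha,w)$) within the one-parameter degree setup. This requires showing that the frequency parameter $w$ behaves like a spectral shift on each $V_{j,k}$, so that the local degree contribution reduces to the $\mathcal V_{j,k}$-isotypical crossing number $\mathfrak t_{j,k}(\alpha_o,w_o)$ multiplied by the basic degree on $\mathcal V_{j,k}$. Once this reduction is in place, conclusions (i)--(iii) match precisely the combinatorial input needed to guarantee non-cancellation of $n_{\mathcal H_o}$.
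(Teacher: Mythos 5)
Your overall strategy coincides with the paper's: reduce to the twisted $\mathcal G$-equivariant degree with one free parameter, express the relevant coefficient through the isotypical crossing numbers $\mathfrak t_{j,k}(\alpha_o,w_o)$ multiplied by basic degrees, and use (i)--(iii) to rule out cancellation at the maximal twisted orbit type $(\mathcal H_o)$. However, there is a genuine gap at exactly the step you flag as ``the main obstacle,'' and the device you propose to close it does not work. The map $\Phi_*:\br^2\times V_*\to V_*$ has \emph{two} free scalar parameters, so the expression $\maGdeg(\Phi_*,\Omega)=\sum_{(H)}n_H(H)$ is simply not defined within the one-free-parameter twisted degree theory, which requires an admissible $\mathcal G$-pair of the form $f:\br\oplus W\to W$. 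A ``suspension'' cannot repair this: suspension adds equal dimensions to domain and codomain and therefore preserves the mismatch. The paper's resolution is the auxiliary (Urysohn) function: one chooses a $\mathcal G$-invariant $\zeta:\overline\Omega\to\br$ that is negative on the trivial manifold $M\cap\overline\Omega$ and positive on the lateral boundary $\|x_*\|=\delta$, and replaces $\Phi$ by $\Phi_\zeta=(\zeta,\Phi):\br^2\times V\to\br\times V$. This supplies the missing codomain coordinate, makes $(\Phi_\zeta,\Omega)$ an admissible pair for the one-parameter twisted degree, and is precisely what lets the frequency $w$ enter as the second component of the parameter plane over which the crossing numbers are computed. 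Without this (or an equivalent) construction, your splitting formula for $n_H$ has no object on its left-hand side.

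A secondary point: by restricting to $V_*$ at the outset you suppress the contribution of the $V_0$-directions, whereas the paper works on all of $V$ and obtains the product formula \eqref{eq:deg-fde}, in which the $V_0$/non-critical spectrum contributes the factor $\prod_{\mu\in\sigma_-}\prod_i(\deg_{\mathcal V_i})^{m_i(\mu)}$. That factor is not the identity; the argument that it does not destroy the $(\mathcal H_o)$-coefficient is that each $\deg_{\mathcal V_i}$ is an \emph{invertible} element of the Burnside ring $A(\mathcal G)\subset U(\mathcal G)$. If you insist on the reduction to $V_*$, you must either justify it by an equivariant Lyapunov--Schmidt argument using (A1) or retain this factor and invoke its invertibility; your proposal does neither explicitly. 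The remaining ingredients of your argument (nonvanishing of the $(\mathcal H_o)$-coefficient in the basic degree $\deg_{\mathcal V_{j_o,k_o}}$ by maximality, sign coherence via (iii), and the existence principle yielding $|\mathcal G/\mathcal H_o|_{\bold S}$ branches) match the paper's Steps (b)--(c).
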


The proof literally follows the argument presented in \cite{AED}.
For completeness, here we give a brief sketch of the proof. Under extra transversality/genericity conditions,
this statement is well-known, see for example \cite{GS, GolSchSt}.

\paragraph{\bf Sketch of the proof of Proposition \ref{prop-bifurcation-relative-equib}.} The proof splits into three steps.

%\medskip
{\it (a) Auxiliary function and admissibility.} 
Take $\mathbb R^2$ with the trivial $\mathcal G$-action and define a $\mathcal G$-invariant neighborhood of the point $(\alpha_o,w_o,x_o)$ in $\br^2\oplus V$ of the form
\[
\Omega:=\{(\alpha,w,x): |\alpha-\alpha_o|<\ve,\, |w - w_o| <\ve, \, \|\overline x-x_o\|<\ve,\, \|x_*\|<\delta\},
\]
where $(\alpha,w,x)=(\alpha,w,\overline x+x_*)$, $(\alpha,w,\overline x)\in M$, $x_*\perp \overline x$,
and a $G$-invariant continuous function $\zeta: \overline{\Omega}\to \mathbb{R}$ satisfies
\begin{gather*}
\zeta(\alpha,w,x)<0 \;\;\text{ if }\; (\alpha,w,x)\in M\cap \overline{\Omega},\\
\zeta(\alpha,w,x)>0 \;\;\text{ if }\; (\alpha,w,x)\in \overline{\Omega} \;\text{ and } \|x_*\|=\delta
\end{gather*}
(%see Figure \ref{fig:Hopf-auxil}; 
recall that $\zeta$ is called an {\it auxiliary function}). By condition (A0),
the map $\Phi_\zeta:\br^2\times V\to \br\times V$ defined by
\[
\Phi_\zeta(\alpha,w,x)=(\zeta(\alpha,w,x), \Phi(\alpha,w,x)), \quad (\alpha,w,x)\in \overline \Omega,
\]
is $\mathcal G$-equivariant (cf. \eqref{eq:bif-HB}). Moreover,
condition (A2) allows us to choose the parameters $\ve,\ \delta>0$ of the set $\Omega$ to be sufficiently small to ensure that
the map $\Phi_\zeta$ 
is $\Omega$-admissible (i.e., $\Phi_\zeta$ does not have zeroes on $\partial \Omega$).

\medskip
{\it (b) Twisted degree and a sufficient condition for the bifurcation of relative equilibria.}
Since $\Phi_\zeta$ is $\mathcal G$-equivariant and $\Omega$-admissible, the twisted degree 
\begin{equation}\label{eq:equiv-degree}
\maGdeg(\Phi_\zeta,\Omega) = \sum_{(\mathcal H)}n_{\mathcal H}(\mathcal H) 
\end{equation}
is correctly defined (here, $n_{\mathcal  H}\in \mathbb Z$ and the summation is going over all twisted orbit types occurring in $V$).
The following statement is parallel to Theorem 9.28 from \cite{AED}.

\begin{proposition}\label{prop:twisted-sufficient}
Given \eqref{eq:equiv-degree}, assume that $n_{\mathcal H_o} \not=0$ for some maximal twisted orbit type $(\mathcal H_o)$ in $V$. 
Then, the conclusion of Proposition \ref{prop-bifurcation-relative-equib} holds.
\end{proposition}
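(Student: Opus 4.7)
The plan is to combine the standard bifurcation principle of the twisted equivariant degree with a counting of $\bold S$-orbits in $\mathcal G/\mathcal H_o$. I would argue by contradiction: assume no continuous branch of zeros of $\Phi_\zeta$ of orbit type $(\mathcal H_o)$ inside $\Omega$ accumulates on the trivial manifold $M$. By maximality of $(\mathcal H_o)$ in $V$, any zero of $\Phi_\zeta$ lying in $V^{\mathcal H_o}$ but not in a stratum of strictly larger isotropy has isotropy exactly $\mathcal H_o$; hence the set of $(\mathcal H_o)$-type zeros is a closed $\mathcal G$-invariant subset bounded away from $M\cap\overline\Omega$. I would enclose this subset in a $\mathcal G$-invariant open neighborhood $U\subset \Omega$ with $\overline U \cap M = \emptyset$ and construct a $\mathcal G$-equivariant $\Omega$-admissible homotopy supported in $U$ that eliminates all $(\mathcal H_o)$-type zeros without creating zeros of other maximal twisted types. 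By homotopy invariance of $\maGdeg$, this would force the $(\mathcal H_o)$-coefficient to vanish, contradicting $n_{\mathcal H_o}\ne 0$.

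The contradiction yields a continuous branch $K$ of zeros of $\Phi_\zeta$ of orbit type $(\mathcal H_o)$ whose closure meets $M$. For each $(\alpha,w,x)\in K$, Definition \ref{defrel} associates the relative equilibrium $\bold S(x)$ of equation \eqref{eq:fde1}; the condition $x\notin V^{\bold S}$ holds because $(\mathcal H_o)$ is a \emph{twisted} orbit type, so $\bold S \not\subset \mathcal H_o$ and the inclusion $V^{\mathcal H_o}\cap V^{\bold S}\subsetneq V^{\mathcal H_o}$ is strict. Conditions (i)--(iii) of Definition \ref{defbranch} follow from the localization encoded in $\Omega$ and $\zeta$, together with assumption (A2) ensuring accumulation at $(\alpha_o,w_o,x_o)$. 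For the multiplicity: the $\mathcal G$-action carries $K$ to $gK$ for any $g\in\mathcal G$, and the orbit of a representative $x$ of $K$ is identified with $\mathcal G/\mathcal H_o$; two of its points lie in the same relative-equilibrium branch if and only if they lie in the same $\bold S$-orbit inside $\mathcal G/\mathcal H_o$. Consequently, the $\mathcal G$-orbit of $K$ splits into exactly $|\mathcal G/\mathcal H_o|_{\bold S}$ pairwise distinct branches of relative equilibria, all with minimal symmetry $(\mathcal H_o)$.

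The main obstacle is the construction of the admissible $\mathcal G$-equivariant homotopy used to derive the contradiction. One must use the maximality of $(\mathcal H_o)$ in $V$ to guarantee that no zeros of larger twisted isotropy are created or collided with during the deformation, and control the behavior near the ``flow direction'' generated by $J$ along $M$ so that $\zeta$ keeps all zeros of the deformed map bounded away from $M$ throughout the homotopy. The recurrence formula from \cite{AED} then permits the $(\mathcal H_o)$-coefficient of $\maGdeg$ to be read off from data on $V^{\mathcal H_o}$ alone, decoupling it cleanly from the contributions of other isotypical components and finishing the contradiction.
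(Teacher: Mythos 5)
Your overall strategy --- derive a contradiction with $n_{\mathcal H_o}\neq 0$ from the absence of bifurcating zeros of orbit type $(\mathcal H_o)$, and then count relative equilibria as $\bold S$-orbits inside $\mathcal G(x)\simeq\mathcal G/\mathcal H_o$ --- is the right one. Note that the paper gives no proof of this proposition at all: it is stated as ``parallel to Theorem 9.28 from \cite{AED}'', and the argument you are reconstructing is essentially the one behind that cited theorem. Your final counting step is correct and cleanly stated: a zero $x$ with $\mathcal G_x=\mathcal H_o$ has $\mathcal G$-orbit $\mathcal G/\mathcal H_o$, two points of which lie on the same relative equilibrium exactly when they lie in the same $\bold S$-orbit, whence the $|\mathcal G/\mathcal H_o|_{\bold S}$ distinct branches; and your observation that a twisted isotropy $\mathcal H_o\not\supseteq\bold S$ forces $x\notin V^{\bold S}$ correctly delivers condition (i) of Definition \ref{defbranch}.

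There are, however, two concrete gaps. First, the inference ``no continuous branch of $(\mathcal H_o)$-type zeros accumulates on $M$, hence the set of such zeros is a closed set bounded away from $M\cap\overline{\Omega}$'' is false as stated: a sequence of isolated zeros can converge to a point of $M$ without containing any continuum. The standard repair is the Kuratowski--Whyburn separation lemma applied to the compact set $\Phi_\zeta^{-1}(0)\cap\overline{\Omega}$ together with the relevant boundary pieces: if no connected component joins $M$ to the sphere $\|x_*\|=\delta$, this compact set splits into two disjoint compact pieces, between which one inserts an isolating $\mathcal G$-invariant neighborhood. Without this step your contradiction does not get off the ground. Second, you defer the construction of the admissible equivariant homotopy ``eliminating all $(\mathcal H_o)$-type zeros'' and flag it as the main obstacle, but never resolve it, so the plan is incomplete precisely at its load-bearing step. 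In fact no such homotopy is needed: for a \emph{maximal} twisted orbit type the existence property of the twisted degree asserts directly that $n_{\mathcal H_o}\neq 0$ implies $\Phi_\zeta^{-1}(0)\cap\Omega_{(\mathcal H_o)}\neq\emptyset$; applying this to the family of neighborhoods obtained by letting $\delta\to 0$ (the degree being unchanged by excision and by admissible deformation of the auxiliary function $\zeta$) yields nontrivial zeros of isotropy type $(\mathcal H_o)$ with $\|x_*\|$ arbitrarily small, and the separation lemma then upgrades this to a continuum meeting $M$ as required by Definition \ref{defbranch}. Replacing your homotopy step by this argument and repairing the first inference would make the proof complete.
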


{\it (c) Twisted degree and crossing numbers.} To effectively apply Proposition \ref{prop:twisted-sufficient} for proving Proposition \ref{prop-bifurcation-relative-equib}, one 
needs to link the twisted degree  \eqref{eq:equiv-degree} to (isotypical) crossing numbers (cf. Definition \ref{isotyp-crossing-number}). To this end, one can use the following 
standard computational formula (cf. \cite{AED} and \eqref{isot-Gamma-S1}):
\begin{equation}\label{eq:deg-fde}
\maGdeg(\Phi_\zeta,\Omega)=
\prod_{\mu\in \sigma_-}\prod_{i=0}^r \Big(\deg_{\mathcal V_i}\Big)^{m_i(\mu)}\bullet
\sum_{j,k} {\mathfrak t}_{j,k}(\alpha_o,w_o)\, \deg_{\mathcal V_{j,k}},
\end{equation}
where $j=0,1,\dots,s$, $k = 1,...,m$; 
$\sigma_-$ denotes the set of all (real) negative roots  $\mu$ of the quasi-polynomial $\mathcal P(\alpha_o,\lambda)$ at $x_o$; $m_i(\mu)$ stands for the $\cV_i$-isotypical multiplicity of $\mu$;  $\deg_{\mathcal V_i}$ (resp. $\deg_{\mathcal V_{j,k}}$) denote the so-called basic degrees related to irreducible $\bold \Gamma$-representations (resp. 
$\mathcal G$-representations); and, ``$\bullet $" stands for the multiplication in the Euler ring $U(\mathcal G)$ (cf. \cite{tD}; 
see also Appendix for more details). Take $(\mathcal H_o)$  and  $V_{j_o,k_o} $ satisfying (i)--(iii). Then (see conditions (i)--(ii)), $(\mathcal H_o)$ appears with non-zero coefficient in  
$\mathfrak {t}_{j_o,k_o}(\alpha_o,w_o) \deg_{\mathcal V_{j_o,k_o}}$. Condition (iii) implies that $(\mathcal H_o)$ ``survives" in the sum given in the right hand side of \eqref{eq:deg-fde}. Finally,
since any $\deg_{\mathcal V_i}$ is an invertible element of the Burnside ring $A(\mathcal G) \subset U(\mathcal G)$, the result follows.

\vs

\subsection{Hopf bifurcation from a relative equilibrium}\label{sec:rel-eq}
\paragraph{\bf Regular relative equilibria.} Suppose that for some $\alpha=\overline \alpha$ and $\overline x \in V \setminus V^{\bold S}$,
equation \eqref{eq:fde1}  has a relative equilibrium 
$\bold S (\overline x)$  
(see Definition \ref{defrel}).
Then,  equation \eqref{eq:fde-reduced} is satisfied for some $\overline w\in\mathbb R$,
\begin{equation}\label{dd}
\Phi(\overline \alpha,\overline w, \overline x)=\wt f(\overline \alpha, i \overline w ,\overline x)-\overline  wJ \overline x=0,
\end{equation}
where $\overline x=(\overline q,\overline v)\in V^{\bold S}\oplus V_*$ with $\overline v\ne 0$.
Since
 \[
\left.\frac{d}{d\tau} \Phi(\overline \alpha,\overline w, e^{\tau J}\overline x)\right|_{\tau=0} =
 \left.D_x \Phi(\overline \alpha,\overline w, e^{\tau J}\overline x) J e^{\tau J}\overline x \right|_{\tau=0}=
D_x\Phi(\overline \alpha,\overline w, \overline x) J\overline x,
 \]
relation \eqref{dd} and the $\bold S$-equivariance of $\Phi$ imply
 that the directional derivative of $ \Phi$ at the point $\overline x$ in the direction of the orbit $\bold S (\overline x)$ is zero:
 \begin{equation}\label{degenerate}
 D_x\Phi(\overline \alpha,\overline w, \overline x) J\overline x=0.
 \end{equation}
 That is, the map $ D_x\Phi(\overline \alpha,\overline w, \overline x)$ has a non-zero kernel.
\vs
\begin{definition}\rm
A relative equilibrium
$\bold S( \overline x)$ will be called {\it regular} if the kernel of the map given by the following block-matrix
\begin{equation}\label{xxx}
\Big[ D_w \Phi(\overline \alpha,\overline w,\overline x)\;|\;  D_x \Phi(\overline \alpha,\overline w,\overline x)\Big] :\br\times V \to V
\end{equation}
is {\it one-dimensional}.
\end{definition}
\vs
We  make the following assumption:
\smallskip
\begin{itemize}
\item[(A3)]
Equation \eqref{eq:fde1} has a regular relative equilibrium $\bold S(\overline x)$
for some $\alpha=\overline\alpha$, $w=\overline w$.
\end{itemize}
\smallskip

The implicit function theorem implies that there exist a neighborhood $\mathscr U$ of $\overline \alpha $ in $\br$ and the functions $w:\mathscr U\to \br$ , $w(\overline \alpha)=\overline w$, and $u:\mathscr U\to S:=\{x\in V: x\bullet J\overline{x}=0\}$, $u(\overline \alpha)=0$, such that
\begin{equation}\label{eq:Phi-alpha}
\Phi(\alpha,w(\alpha),\overline x +u(\alpha))=0,\quad \alpha\in \mathscr U.
\end{equation}
Put $x(\alpha)=\overline x+u(\alpha)$, then we clearly have that
\begin{equation}\label{re}
 x_\alpha(t):=e^{ (w(\alpha) t +\tau)J} x(\alpha)
\end{equation}
is a branch of relative equilibria parametrized by $\alpha\in \mathscr U$.
It will be assumed that this branch has symmetric properties (cf.~Proposition \ref{proposition}):
\smallskip
\begin{itemize}
\item[(A4)]  The regular relative equilibrium $\bold S(\overline x)$ admits a twisted group symmetry $\mathcal H <  \mathcal G$.
\end{itemize}
\smallskip
 
Due to the equivariance, the twisted symmetry group $\mathcal H$ is the same for all relative equilibria $\bold S\big(x(\alpha)\big)$, $\alpha\in \mathscr U$,
of the branch \eqref{re}. We note that the relation
 \begin{equation}\label{degenerate'}
 D_x\Phi( \alpha, w(\alpha), x(\alpha)) J x(\alpha)=0,
 \end{equation}
 which is similar to \eqref{degenerate}, holds, and the relative equilibria $\bold S(x(\alpha))$
 are regular for all $\alpha\in\mathscr U$.

\paragraph{\bf Hopf bifurcation of relative periodic solutions.}
We are interested in finding  
solutions to \eqref{eq:fde1} of the form
\begin{equation}\label{eq:3}
x(t)=e^{(w(\alpha)+\phi)t J }(x(\alpha)+y(t)),
\end{equation}
where $y(t)$ is a non-stationary $p$-periodic function with $p=2\pi/\beta$  for some $\beta>0$,
and in symmetric properties of these solutions.  Here $y(t)$ and $\beta, \phi\in \br$ are unknown.
Periodic and quasi-periodic solutions of type \eqref{eq:3} are called {\it relative periodic} solutions.

To be more precise, let us define the   so-called {\it equivariant Hopf bifurcation} of small amplitude {\it relative periodic} solutions of type \eqref{eq:3}
from the family of relative equilibria \eqref{re}.
\vs

 The following definition is similar to Definition \ref{defbranch}.
\vs
 \begin{definition} \label{defbranch1} \rm
A set $K$ of quadruplets $(\alpha,\beta,\phi,x)$, where $x$ is a solution to equation \eqref{eq:fde1}
of the form \eqref{eq:3},
is called a {\it continuous branch of relative periodic solutions} bifurcating (via the {\it equivariant} Hopf bifurcation) from the relative equilibrium 
$(\overline \alpha,\overline w,\bold S(\overline x))$
 if there exists a $\beta_o>0$ such that:
\begin{itemize}

\item[(i)] $\overline K$ contains a connected component $K_o$ such that $ (\overline{\alpha},\beta_o,0,\overline{x})\in K_o$;

\item[(ii)] For any $\varepsilon>0$ there is a $\delta>0$ such that if $(\alpha, \beta,\phi,x)\in K\cap K_o$
and $\|y\|<\delta$, then $|\alpha-\overline\alpha|<\varepsilon$, $|\phi|<\delta$, and $|\beta-\beta_o|<\varepsilon$.

\end{itemize}
\end{definition}

 \vs
For a given $\alpha\in \mathscr U$, consider the derivative ${ D}_{\bold x} f(\alpha,x):  C_{-r}(V)\to  V$
of the functional $f$ with respect to ${\bold x}\in C_{-r}(V)$ and put
\begin{equation}\label{eq:B-alpha}
{\mathcal B}_\alpha:={D}_{\bold x} f(\alpha, e^{w(\alpha)J\cdot} x(\alpha))={D}_{\bold x} f(\alpha, \xi (i w(\alpha)) x(\alpha)): C_{-r}(V)\to V.
\end{equation}

For $\alpha\in  \mathscr U$ and $\lambda\in\mathbb C$, define the linear map ${\mathcal R}_\alpha : V^c\to V^c$
 in the complexification $V^c$ of $V$ by the formula
\begin{equation}\label{eq:R-alpha}
{\mathcal R}_\alpha(\lambda) y := {\mathcal B}_\alpha ( e^{(w(\alpha)J+\lambda\,\id)\cdot} y),\qquad y\in V^c.
\end{equation}
 Then, 
 \begin{equation}\label{charact'}
 {\rm det}_{\mathbb C}\, \left({\mathcal R}_\alpha(\lambda)-w(\alpha)J-\lambda\, \id\right)=0
 \end{equation}
 is the characteristic equation for the linearization of system \eqref{eq:fde1}
 on the relative equilibrium $\bold S (x(\alpha))$. Since
 \begin{equation}\label{eq:R(0)}
 {\mathcal R}_\alpha(0)-w(\alpha)J =D_x\Phi(\alpha,w(\alpha),x(\alpha)),
 \end{equation}
 equation \eqref{degenerate'} implies that the characteristic equation \eqref{charact'} has a zero root $\lambda=0$
 corresponding to the eigenvector $J x(\alpha)$;
 furthermore, due to the regularity of the relative equilibrium
 $\bold S(x(\alpha))$, this root is simple.

 \vs A necessary condition for the Hopf bifurcation is that
characteristic equation \eqref{charact'} has a pair of purely imaginary roots
$\lambda=\pm i \beta_o$, $\beta_o>0$, for $\alpha=\overline\alpha$.
We make a stronger assumption:

\begin{itemize}
\item[(A5)]  Characteristic equation \eqref{charact'} has a pair of purely imaginary roots
$\lambda=\pm i \beta_o$, $\beta_o>0$, for $\alpha=\overline\alpha$, and has no roots of the
form $\lambda=i\beta$, $\beta\ge0$, for $\alpha\ne \overline{\alpha}$, $\alpha\in \mathscr U$.
\end{itemize}

Put $\mathcal K:= \mathcal H \times S^1$ and consider the $\mathcal K$-isotypical decomposition of $V^c$:
\begin{equation}\label{decomp'}
V^c=U_{0,1}\oplus U_{1,1}\oplus \cdots \oplus U_{p,1},
\end{equation}
where $S^1$-action is given by complex multiplication.
Due to the equivariance, each isotypical component
$U_{j,1}$ is invariant for the map ${\mathcal R}_\alpha(\lambda)$ and for $J$. 
Therefore,
we can introduce the characteristic polynomial
\begin{equation}\label{barpi}
\overline{\mathcal P}_{j,1} (\alpha,\lambda):={\rm det}_{\mathbb C}\, \left(({\mathcal R}_\alpha(\lambda)-w(\alpha)J-\lambda\, \id )|_{U_{j,1}} \right),\qquad \lambda\in \mathbb C,
\end{equation}
associated with each isotypical component $U_{j,1}$, and define the $U_{j,1}$-isotypical crossing numbers
\begin{equation}\label{crossingnumber'}
\overline{\mathfrak t}_{j,1}(\overline{\alpha},\beta_o)=\bar{\mathfrak t}_{j,1}^-(\overline{\alpha},\beta_o)-\bar{\mathfrak t}^+_{j,1}(\overline{\alpha},\beta_o)
\end{equation}
at the point $(\overline\alpha,\beta_o)$ in the same way as we did in Subsection \ref{subsec-relative-equib} (cf.~\eqref{crossingnumber}). 
\vs

\begin{theorem}\label{t1}
Given system \eqref{eq:fde1}, assume conditions (A0) and (A3)--(A5) are satisfied.  
Take decomposition \eqref{decomp'} and let $(\mathcal L_o)$ be a maximal twisted orbit type in $V^c$.
Denote by $\mathfrak N$ the set of all $\mathcal K$-isotypical components in \eqref{decomp'} 
in which $(\mathcal L_o)$ is an orbit type. Assume there exists $U_{j_o,1} \in \mathfrak N$ such that:

(i) $(\mathcal L_o)$ is a maximal twisted orbit type in $U_{j_o,1} $ (cf. Remark \ref{rem-maximal-twisted-isot-comp});

(ii) $\mathfrak t_{j_o,1}(\alpha_o,w_o) \not=0 $ (cf. \eqref{crossingnumber});

(iii) ${\mathfrak t}_{j,1}(\alpha_o,w_o) \cdot  {\mathfrak t}_{j^{\prime},1}(\alpha_o,w_o) \geq 0$ \quad for all $U_{j,1}, U_{j^{\prime},1} \in \mathfrak N.$

\smallskip
\noindent
Then, there exist at least $|\mathcal H / \mathcal L_o|_{S^1}$ continuous branches of relative periodic 
solutions \eqref{eq:3} bifurcating via the Hopf bifurcation from the relative equilibrium 
$(\overline \alpha,\overline w,\bold S (\overline x))$  (cf. Definition \ref{defbranch1}) and having the minimal symmetry $(\mathcal L_o)$.
%(here $|\cdot |_{S^1}$ stands for the number of $S^1$-orbits).
%%%%%%% Insert 1
\end{theorem}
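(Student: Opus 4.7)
My plan is to mirror the three-step sketch of Proposition \ref{prop-bifurcation-relative-equib}, now working with the group $\mathcal K = \mathcal H \times S^1$ and with $(\mathcal L_o)$ in place of $(\mathcal H_o)$. The additional $S^1$ factor in $\mathcal K$ is the time-shift action on $2\pi$-periodic functions, which appears after the unknown period $2\pi/\beta$ has been rescaled to $2\pi$.

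First I would substitute the ansatz \eqref{eq:3} into \eqref{eq:fde1}, use the equivariance identity \eqref{eq:action} to push the rotating-frame factor $e^{(w(\alpha)+\phi)tJ}$ through $f$, and rescale time. This converts \eqref{eq:fde1} into a $\mathcal K$-equivariant operator equation $\mathfrak F(\alpha,\beta,\phi,y)=0$ on $\mathbb R^3 \oplus \mathscr C$, where $\mathscr C$ is a Banach space of continuous $2\pi$-periodic $V$-valued functions on which $\mathcal H$ acts pointwise in $V$ and $S^1$ acts by time translation. Assumptions (A3)--(A4) provide the solution $(\overline\alpha,\beta_o,0,0)$. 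The zero mode $Jx(\alpha)$ present in the kernel of $D_y\mathfrak F$ because of \eqref{degenerate'} would be absorbed by the freedom in $\phi$; equivalently, I would restrict $y$ to a complement of $\mathbb R\,Jx(\alpha)$. The regularity condition in (A3) combined with (A5) then ensures that the linearization of the reduced equation at $\alpha=\overline\alpha$ has only the pair $\pm i\beta_o$ on the imaginary axis and that these roots cross transversally as $\alpha$ passes through $\overline\alpha$.

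Next I would form a small $\mathcal K$-invariant neighbourhood $\Omega$ of $(\overline\alpha,\beta_o,0,0)$ and a $\mathcal K$-invariant auxiliary function $\zeta$ along the lines of step (a) in the previous sketch, and set $\mathfrak F_\zeta := (\zeta,\mathfrak F)$; admissibility of $\mathfrak F_\zeta$ on $\partial\Omega$ is secured by (A5). The twisted $\mathcal K$-equivariant degree with one free parameter $\alpha$ is then well-defined,
\[
\mathcal K\text{-}\mathrm{deg}\,(\mathfrak F_\zeta,\Omega) \;=\; \sum_{(\mathcal L)} n_{\mathcal L}\,(\mathcal L),
\]
and the analogue of the computational formula \eqref{eq:deg-fde}, adapted to the decomposition \eqref{decomp'} and to the characteristic polynomials $\overline{\mathcal P}_{j,1}$ from \eqref{barpi}, expresses it as the product of an invertible Burnside-ring element (the stable-spectrum contribution) with $\sum_{j=0}^{p}\bar{\mathfrak t}_{j,1}(\overline\alpha,\beta_o)\,\deg_{U_{j,1}}$. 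Conditions (i) and (ii) place $(\mathcal L_o)$ with nonzero coefficient in the $U_{j_o,1}$-summand, and the maximality of $(\mathcal L_o)$ means that this coordinate can receive contributions only from the components collected in $\mathfrak N$. Condition (iii) forbids these contributions from cancelling, and multiplication by the invertible Burnside-ring factor then transports the nonvanishing to $n_{\mathcal L_o}\ne 0$. The splitting lemma (the analogue of Proposition \ref{prop:twisted-sufficient} for $\mathcal K$) finally delivers at least $|\mathcal H/\mathcal L_o|_{S^1}$ continuous branches in the sense of Definition \ref{defbranch1}, each of minimal symmetry $(\mathcal L_o)$.

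The hard part will be in step one: checking that the rotating-frame change of variables really does preserve the $\mathcal H \times S^1_{\mathrm{time}}$-equivariance. This rests on the fact that $\mathcal H$ stabilizes the orbit $\bold S(\overline x)$, which forces the $\mathcal H$-action to commute with the generator $J$ on the subspaces relevant to the reduction, together with the possibility of selecting the slice complementary to $\mathbb R\,Jx(\alpha)$ in an $\mathcal H$-invariant way. Only after these equivariance subtleties are pinned down does the twisted-degree computation of step three actually detect the orbit type $(\mathcal L_o)$.
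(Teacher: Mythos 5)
Your proposal is correct and follows essentially the same route as the paper's proof: time rescaling, absorption of the degenerate direction $J x(\alpha)$ into the $\phi$-coordinate via an $\mathcal H$-invariant constraint (the paper realizes this through the identification $z=\phi\,v_\alpha+y$ with $v_\alpha=D_w\Phi\in V^{\mathcal H}$ and the adjoint-eigenvector condition $\mathscr J_\alpha(y)=0$), an auxiliary function making the twisted $\mathcal K$-degree admissible, and the product/crossing-number formula in which conditions (i)--(iii) force $n_{\mathcal L_o}\neq 0$. The equivariance subtlety you flag at the end is exactly the point the paper settles by proving $v_\alpha\in V^{\mathcal H}$, so your outline matches the published argument in both structure and substance.
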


\subsection{Proof of Theorem \ref{t1}}

For the proof, which splits into several steps, we {modify} the twisted equivariant degree approach described in Sections 10.1-2 of \cite{AED} 
(see also the sketch of the proof of Proposition \ref{prop-bifurcation-relative-equib}). 

\medskip

{\it (a) Rescaling time.} 
Substituting (\ref{eq:3}) in   \eqref{eq:fde1} (see also \eqref{eq:Phi-alpha} and \eqref{re}) leads to  equations
\begin{equation}\label{eq:sys-HB}
\begin{cases}
 \dot y(t)=f(\alpha, \wt x+\wt y_t)-(w(\alpha)+\phi) J (x(\alpha)+y(t)),\\
y(t)=y(t+p),\end{cases}
\end{equation}
where $p>0$ is an unknown period of $y$ 
and
\begin{equation}\label{eq:4'}
\wt x(\theta):=e^{(w(\alpha)+\phi)\theta J}x(\alpha),\qquad \wt y_t(\theta):= e^{(w(\alpha)+\phi)\theta J}y(t+\theta).
\end{equation}
By normalizing the period $p={2\pi}/\beta$ of $y$, we obtain
the system
\begin{equation}\label{eq:4}
\begin{cases}
\dot y(t)=\frac1\beta \Big( f(\alpha, \wt x+\wt y^\beta_{ t})- (w(\alpha)+\phi) J(x(\alpha)+ y(t))\Big),\\
y(t)=y(t+2\pi)\end{cases}
\end{equation}
with 
\begin{equation}\label{eq:4''}
 \wt y_t^\beta(\theta):= e^{(w(\alpha)+\phi)\theta J}y(t+\beta\theta).
\end{equation}

\medskip
{\it (b)  Constraint.} This step reflects the specifics  of the Hopf bifurcation of {\it relative} periodic solutions from a {\it relative} equilibrium. Namely,
in order to ensure that the unknown function $y(t)$ is determined ``uniquely" (i.e.~up to shifting the argument), we will assume that this function 
satisfies an additional constraint.
From assumption (A3) and \eqref{degenerate}--\eqref{eq:Phi-alpha}, it follows that for any $\alpha\in \mathscr{U}$, the map given by the matrix
$D_x \Phi(\alpha,w(\alpha),x(\alpha))$ has the one-dimensional kernel ${\rm span}\,\{J x(\alpha)\}$.
Denote by $g^\dagger(\alpha)$ the adjoint eigenvector of the transpose matrix
$D_x \Phi(\alpha,w(\alpha),x(\alpha))^T$ corresponding to the zero eigenvalue:
\[
D_x \Phi(\alpha,w(\alpha),x(\alpha))^T g^\dagger (\alpha)=0,\qquad  g^\dagger(\alpha)\bullet J x(\alpha)=1,\quad \alpha\in \mathscr{U}.
\]
We will look for a solution to 
{\eqref{eq:4}} with the $y$-component satisfying
the constraint
\begin{equation}\label{eq:orth-con}
\mathscr J_\alpha(y) := g^\dagger(\alpha)\bullet \int_0^{2\pi}  y(t)dt = 0.
\end{equation}

{\it (c) Setting system \eqref{eq:4} in functional spaces.} Using the standard identification of
a $2\pi$-periodic $V$-valued function with the $V$-valued function on $S^1$, we reformulate system (\ref{eq:4}) with constraint \eqref{eq:orth-con} as a
$\mathcal K$-equivariant operator equation in the space $\br_+^2 \times \mathscr{W}$, where $\mathcal K$ acts trivially on  $\br_+^2:=\br\times \br_+$ 
and $\mathscr{W}:=H^1(S^1;V)$ stands for the first Sobolev space equipped with the $\mathcal K$-action given by 
\begin{equation}\label{eq:H-action}
(h,e^{i\tau})(u)(t):= h u(t + \tau) \quad  ((h,e^{i\tau}) \in \mathcal H \times S^1 =: \mathcal K, \; u \in  \mathscr{W}).  
\end{equation}
To this end, denote
\begin{equation}\label{eq:v-alpha}
v_\alpha:=D_w \Phi(\alpha,w(\alpha),x(\alpha))\in V 
\end{equation}
and observe that 
\begin{equation}\label{valpha}
v_\alpha\in V^{\mathcal H}.
\end{equation}
Indeed, the $\mathcal H$-action on $V$ induces the $\mathcal H$-action on $\mathbb{R}\times V$, where $\mathcal H$ acts trivially on $\mathbb{R}$.
Since the map $\Phi(\alpha,\cdot,\cdot): \mathbb{R}\times V\to V$ is $\mathcal H$-equivariant and $(w(\alpha),x(\alpha))\in (\mathbb{R}\oplus V)^{\mathcal H}$,
one has that $D \Phi(\alpha,w(\alpha),x(\alpha)): \mathbb{R}\times V\to V$ is $\mathcal H$-equivariant as well, which implies \eqref{valpha}.

Next, given an $\alpha\in \mathscr{U}$, we identify a function $z\in \mathscr W$ with the pair $(y,\phi)$, where $y\in \mathscr{W}$ satisfies \eqref{eq:orth-con} and $\phi\in\mathbb{R}$,
by the relationships
\begin{equation}\label{eq:proj1}
z= \phi\, v_\alpha + y,\qquad \mathscr J_\alpha(y)=0,
\end{equation}
and define the corresponding projections
\begin{equation}\label{proj}
\phi=\hat\pi_\alpha (z), \qquad y= z- \hat\pi_\alpha (z) v_\alpha.
\end{equation}
Let us introduce the following operators:
\begin{align*}
L& : \mathscr{W}  \to L^2(S^1;V),\quad & L(z)&= \dot z,\\
j& :   \mathscr{W} \to C(S^1;V),\quad \;\;\; &j(z)&=z,
\end{align*}
where 
$C(S^1;V)$ is the space of continuous functions 
equipped with the usual sup-norm. 
 Furthermore, define 
$F:\mathbb R^2_+\times C(S^1;V)\to V$ by
\begin{align}\label{eq:def-F}
F(\alpha,\beta,z(t))&: =\frac1\beta \Big( f(\alpha, \wt x+\wt y^\beta_{ t})- (w(\alpha)+\phi) J(x(\alpha)+ y(t))\Big), \quad t\in\br,
\end{align}
with $ (\alpha,\beta,z)\in \mathbb R^2_+$, $z\in C(S^1;V)$, where
the function $y\in C(S^1;V)$
and the scalar $\phi$ are defined by \eqref{proj}; $ \wt x, \wt y^\beta_{ t}$ are defined in \eqref{eq:4'}, \eqref{eq:4''}.
Next,  denote by ${\mathcal N}_{F}:\mathbb R^2_+\times  C(S^1,V)\to L^2(S^1;V)$ the Nemytsky operator associated with the map $F$, i.e.
\begin{equation}\label{eq:ode-defNf}
\Big({\mathcal N}_{F}(\alpha, \beta,z)\Big)(t):= F(\alpha,\beta,z(t)), \quad  z\in C(S^1;V).\end{equation}
%The situation involving these operators is illustrated on Figure \ref{fig:NF-op}
% (which is  {\it not a commutative diagram}).

Since $Lz=Ly$, system (\ref{eq:4}) with constraint \eqref{eq:orth-con} is equivalent to the following operator equation:
\begin{equation}\label{eq:funct-1}
L z=\mathcal N_F(\alpha,\beta,j(z)), \quad (\alpha,\beta)\in \br^2_+, \;\; z\in \mathscr W.
\end{equation}
Using the formulas similar to \eqref{eq:H-action}, one can define the $\mathcal H$-actions on an  $C(S^1,V)$ and $L^2(S^1;V)$.
Clearly, all the operators involved in formula \eqref{eq:funct-1} are $\mathcal K$-equivariant, therefore
equation (\ref{eq:funct-1}) can be transformed to a  
$\mathcal K$-equivariant fixed-point problem in $\br^2_+\times \mathscr W$ as follows. Define the operator $K: \mathscr{W}\to L^2(S^1;V)$ by
\begin{equation}\label{eq:ode-defK}
K(z):= \frac 1{2\pi}\int_0^{2\pi} z(t) \,dt, \quad z\in \mathscr{W},\end{equation}
which is simply a projection on the subspace $V$ of constant functions. Then,  the operator $L+K:\mathscr W\to L^2(S^1;V)$
is an isomorphism. Put
\begin{align}\label{eq:ode-F-def1}
\mathcal F(\alpha,\beta,z)&:=(L+ K)^{-1}\left[\mathcal  N_F(\alpha,\beta,j(z))+ K(z)\right],\\
\mathfrak F(\alpha,\beta,z)&:=z-\mathcal F(\alpha,\beta,z).\label{eq:ode-F-def2}
\end{align}
In this way, the following equation is equivalent to (\ref{eq:funct-1}):
\begin{equation}\label{eq:funct-2}
\mathfrak F(\alpha,\beta,z)=0,\qquad (\alpha,\beta,z)\in \br_+^2\times \mathscr W.
\end{equation}
\vs

{\it (d) Reduction to twisted degree.} Take 
$\overline \alpha, \mathscr{U}$ and  $\bold {S}(x(\alpha))$ provided by condition (A5) (see
also \eqref{eq:Phi-alpha}--\eqref{degenerate'}) and $\beta_o$ provided by (A5). 
Put
\begin{equation*} 
M:= \{(\alpha,\beta,z)\, : \, \alpha \in  \mathscr{U}, \beta \in \mathbb R_+, z \in \bold{S}(x(\alpha))\} \subset \mathbb R^2_+ \times \mathscr{W}, 
\end{equation*}
where 
\begin{equation}\label{eq:decomp-Sobolev}
\mathscr{W} = V \oplus \overline{\bigoplus_{l=1}^{\infty}\mathscr{W}_l}, 
\quad \mathscr{W}_l = \{ e^{i l t} \cdot y_l \; : y_l \in V^c\},
\end{equation}
and the subspace of constant functions is identified with the space $V$. For any small $\varepsilon > 0$, define a three-dimensional $\mathcal K$-invariant 
submanifold
$$
M_{\varepsilon} := \{(\alpha,\beta,z) \in M\, : \; |\alpha - \overline{\alpha}| < \varepsilon, \; |\beta - \beta_o| < \varepsilon\} \subset 
\mathbb R^2_+ \times V \subset \mathbb R^2_+ \times \mathscr{W}
$$
of $M$.
Take a small $r >0$,  define a normal $\mathcal K$-invariant neighborhood of  $M_{\varepsilon}$ by
$$
\mathscr{N}_{\varepsilon,r} := \{u+v \in \mathbb R^2_+ \times  \mathscr{W} \; : \; u \in M_{\varepsilon}, \; v \perp \tau_u(M_{\varepsilon}), \; \|v\| < r\}
$$
and denote
$$
\partial _M^{\mathscr{N}}:=   \partial (\mathscr{N}_{\varepsilon,r}) \cap M,  \quad    \partial _r^{\mathscr{N}}:= \{u +v \in  \mathscr{N}_{\varepsilon,r} \; : \; 
\|v\| = r\}. 
$$
By condition (A5), one can choose $\varepsilon$ and $r$ to be so small that 
$$
\mathfrak F^{-1}(0) \cap \partial  (\mathscr{N}_{\varepsilon,r}) \subset \partial _M^{\mathscr{N}} \cup  \partial _r^{\mathscr{N}}.
$$
Let $\xi : \overline{\mathscr{N}_{\varepsilon,r}} \to \mathbb R$ be a $\mathcal K$-invariant Urysohn function which is positive on  $\partial _r^{\mathscr{N}}$
and negative on  $\partial _M^{\mathscr{N}}$. Then, the map $\mathfrak F_{\xi} : \overline{\mathscr{N}_{\varepsilon,r}} \subset \mathbb R^2_+ \times \mathscr{W} \to \mathbb R \times \mathscr{W}$ given by
$$
\mathfrak F_{\xi}(\alpha,\beta,z): = (\xi(\alpha,\beta,z), \mathfrak F(\alpha,\beta,z))
$$  
is $\mathcal K$-equivariant and $\mathscr{N}_{\varepsilon,r}$-admissible, therefore the $\mathcal K$-equivariant twisted degree 
\begin{equation}\label{eq:twisted-deg-K}
\mathcal K{\rm -deg}(\mathfrak F_{\xi},\mathscr{N}_{\varepsilon,r}) = \sum_{(\mathcal L)}n_{\mathcal L}(\mathcal L)
\end{equation}  is correctly defined. 

\vs
\begin{proposition}\label{prop:sufficient-relative-periodic}
Let $(\mathcal L_o)$ and   $(\overline \alpha,\overline w,\bold S (\overline x))$  be as in Theorem \ref{t1} and assume
that  $n_{\mathcal L_o} \not=0$ in \eqref{eq:twisted-deg-K}.  Then, the conclusion of Theorem \ref{t1} holds.
\end{proposition}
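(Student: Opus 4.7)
The plan is to follow the standard contradiction-based topological bifurcation argument of the equivariant degree method (chapters 9--10 of \cite{AED}), adapted to the ``relative'' setting where the trivial stratum is the two-dimensional submanifold $M$ rather than a curve of equilibria.

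First, I would argue by contradiction: suppose no continuous branch of relative periodic solutions of symmetry $(\mathcal L_o)$ bifurcates from $(\overline\alpha,\overline w,\bold S(\overline x))$ in the sense of Definition \ref{defbranch1}. Pass to the fixed-point subspace $(\mathbb R_+^2\times \mathscr W)^{\mathcal L_o}$; since $(\mathcal L_o)$ is a maximal twisted orbit type, Remark \ref{rem-maximal-twisted-isot-comp} and condition (A5) guarantee that $\mathfrak F$ is $\mathscr N_{\varepsilon,r}^{\mathcal L_o}$-admissible for $\varepsilon,r$ small. Applying a Kuratowski-style connectivity lemma to $\mathfrak F^{-1}(0)\cap \mathscr N_{\varepsilon,r}^{\mathcal L_o}$, the non-bifurcation assumption yields a separation of the nontrivial zero set from $M_\varepsilon^{\mathcal L_o}$ on $\partial \mathscr N_{\varepsilon,r}^{\mathcal L_o}$. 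This separation can be encoded by a $W(\mathcal L_o)$-invariant Urysohn function and used to construct a $\mathcal K$-equivariant $\mathscr N_{\varepsilon,r}^{\mathcal L_o}$-admissible homotopy deforming $\mathfrak F_\xi^{\mathcal L_o}$ into a map whose zero set is contained strictly inside $M_\varepsilon^{\mathcal L_o}$.

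Second, I would invoke the recurrence (or ``restriction'') formula for the twisted equivariant degree: for a maximal twisted orbit type $(\mathcal L_o)$, the coefficient $n_{\mathcal L_o}$ in \eqref{eq:twisted-deg-K} is equal (up to a nonzero normalization by $|W(\mathcal L_o)|$) to a Brouwer-type degree of $\mathfrak F_\xi^{\mathcal L_o}$ on $(\mathscr N_{\varepsilon,r})^{\mathcal L_o}$. The homotopy constructed in the previous step, together with the excision property and the fact that $\xi$ is negative on $\partial_M^{\mathscr N}$, forces that Brouwer-type degree to vanish. This contradicts the hypothesis $n_{\mathcal L_o}\neq 0$, so at least one branch of relative periodic solutions of symmetry $(\mathcal L_o)$ must bifurcate.

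Third, the multiplicity $|\mathcal H/\mathcal L_o|_{\bold S}$ comes from counting distinct $S^1$-orbits in $\mathcal H/\mathcal L_o$: since the full group $\mathcal K=\mathcal H\times S^1$ acts on solutions by combining spatial $\mathcal H$-action with time-shift $S^1$, two branches obtained from one another by a $\mathcal H$-element outside the $S^1\cdot \mathcal L_o$-coset must be counted as geometrically distinct continuous branches of relative periodic solutions in the sense of Definition \ref{defbranch1}. Summing the contributions at $(\mathcal L_o)$ of all such conjugate branches accounts precisely for $|\mathcal H/\mathcal L_o|_{\bold S}$ branches.

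The main obstacle I expect is the technical verification that everything is well-posed in the Sobolev setting: namely, that $L+K$ is an isomorphism, that the Nemytsky operator $\mathcal N_F$ of \eqref{eq:ode-defNf} is completely continuous when composed with $(L+K)^{-1}$, and that the resulting $\mathfrak F_\xi$ is a compact vector field making $\mathcal K\text{-deg}$ well defined on $\mathscr N_{\varepsilon,r}$. A further subtlety compared with \cite{AED} is that the trivial set $M$ here has dimension two (parameter $\alpha$ plus the $\bold S$-orbit direction $Jx(\alpha)$), so the normal slice, the constraint \eqref{eq:orth-con}, and the decomposition \eqref{eq:proj1}--\eqref{proj} must be used carefully to ensure that the kernel direction $Jx(\alpha)$ is factored out and does not spuriously inflate $n_{\mathcal L_o}$; the regularity assumption (A3) together with (A5) are exactly what guarantee this.
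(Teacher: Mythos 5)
Your proposal is correct and follows essentially the same route as the paper: the paper's proof simply invokes the argument of Theorem 9.28 from \cite{AED} for the existence of a branch of zeros of $\mathfrak F$ with symmetry $(\mathcal L_o)$ and then translates back to quadruplets $(\alpha,\beta,\phi,y)$ via \eqref{eq:proj1}--\eqref{proj}, which is exactly the contradiction/fixed-point-space/recurrence-formula argument you unpack, together with the orbit-counting and the factoring out of the kernel direction $Jx(\alpha)$ that you correctly identify as the role of (A3) and the constraint \eqref{eq:orth-con}.
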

\begin{proof}
Following the same argument as in the proof of Theorem 9.28 from \cite{AED}, one can  establish the existence of a continuous branch of solutions 
$(\alpha,\beta,z)$ to equation \eqref{eq:funct-2} bifurcating from $(\overline{\alpha}, \beta_o,\overline{x})$ with symmetry $(\mathcal L_o)$. 
For any solution $(\alpha,\beta,z)$ belonging to this branch, take $v_{\alpha}$ given by \eqref{eq:v-alpha} and identify $\phi$ and $y$ using \eqref{eq:proj1} and 
\eqref{proj}. Then, the quadruplets $(\alpha,\beta,\phi,y)$ constitute a continuous branch required in the conclusion of Theorem \ref{t1}. Symmetric properties of this
branch are guaranteed by condition \eqref{valpha} and assumption $n_{\mathcal L_o} \not=0$. 
\end{proof}

%\medskip
{\it (e) Computation of twisted degree.} 
To effectively apply Proposition \ref{prop:sufficient-relative-periodic} to proving Theorem \ref{t1}, one 
needs to prove that the hypotheses of Theorem \ref{t1} indeed guarantee a non-zero summand $n_{\mathcal L_o}(\mathcal L_o)$ in twisted degree \eqref{eq:twisted-deg-K}. 
To estimate \eqref{eq:twisted-deg-K}, one can use a computational product formula similar to \eqref{eq:deg-fde} (cf. \cite{AED}). To this end, one needs: 

\smallskip

(i) to show that the restriction of $D_z\mathfrak F(\alpha,\beta,\overline{x})$ to $V$ is invertible;

\smallskip

(ii) to link the restriction of $D_z\mathfrak F(\alpha,\beta,\overline{x})$ to  $\overline{\bigoplus_{l=1}^{\infty}\mathscr{W}_l}$ to crossing numbers.

\smallskip
\noindent
Both problems require to evaluate 
the linearization of $F$ (cf.  \eqref{eq:sys-HB}--\eqref{eq:4''} and \eqref{eq:def-F}--\eqref{eq:ode-F-def2}). Assuming in \eqref{eq:def-F} $\phi$ and $y$ to be small, one obtains for the first summand 
(up to the {\it terms of higher order}):
\begin{equation}\label{eq:main1}
\begin{split}
 f(\alpha, \wt x+\wt y^\beta_{ t}) 
 &= f \big(e^{(w(\alpha) + \phi)\theta J}(x(\alpha) + y(t +\beta\theta)\big) = f \big(e^{(w(\alpha) \theta J} x(\alpha)\big) \\
 &+ D_xf(e^{w(\alpha)J\theta} x(\alpha))  \Big[e^{(w(\alpha) + \phi) \theta J} (x(\alpha) + y(t + \beta \theta)) -   e^{w(\alpha) \theta J}  x(\alpha)  \Big]\\
 &+ (t.o.h.o.).
\end{split}
\end{equation}
The expression in square brackets reads:
\begin{equation}\label{eq:main2}
\begin{split}
e^{w(\alpha) \theta J} e^{\phi \theta J} x(\alpha)  
&+ e^{(w(\alpha) + \phi)\theta J} y(t + \beta \theta) - e^{w(\alpha) \theta J}  x(\alpha) \\ 
&=  e^{w(\alpha) \theta J} \big(e^{\phi \theta J} - {\rm Id}\big) x(\alpha)  + e^{(w(\alpha) + \phi)\theta J} y(t + \beta \theta) \\
&= \phi\theta J e^{w(\alpha) \theta J} x(\alpha)  +  e^{w(\alpha)\theta J} ({\rm Id} + \phi\theta J)y(t + \beta \theta)\\
&= \phi\theta J e^{w(\alpha) \theta J} x(\alpha)  +  e^{w(\alpha)\theta J}y(t + \beta \theta).
\end{split}
\end{equation}
Combining \eqref{eq:main1} and \eqref{eq:main2} yields
\begin{equation}\label{eq:main3}
\begin{split}
 f(\alpha, \wt x+\wt y^\beta_{ t}) &=   f \big(e^{(w(\alpha) \theta J} x(\alpha)\big) \\
&+ D_xf(e^{w(\alpha)J\theta} x(\alpha)) \big(\phi\theta J e^{w(\alpha) \theta J} x(\alpha)  +  e^{w(\alpha)\theta J}y(t + \beta \theta)\big) + (t.o.h.o.).
\end{split}
\end{equation}
The linearization of other summands in \eqref{eq:def-F} gives:
\begin{equation}\label{eq:main4}
- (w(\alpha)+\phi) J(x(\alpha)+ y(t)) = -w(\alpha) J y - \phi J x(\alpha) +  (t.o.h.o.).
\end{equation}
Combining now \eqref{eq:main3}, \eqref{eq:main4},  \eqref{eq:def-F}  with \eqref{eq:v-alpha} and \eqref{eq:bif-HB} yields the following formula for the linearization
of $F$:
\begin{equation}\label{deriv}
D_{\bold z} F(\alpha,\beta,e^{w(\alpha)\theta J}x(\alpha))=\frac1\beta\left( \phi\,v_\alpha +
D_{\bold x}f(\alpha,e^{w(\alpha)\theta J}x(\alpha))\, e^{w(\alpha) \theta J} y(t+\beta\theta)-w(\alpha)J y(t)
\right),
\end{equation}
where $y$ and $\phi$ are defined by \eqref{proj}.
Therefore, $D_z\mathfrak F(\alpha,\beta,\overline{x})_{|V}$ 
has the form
\begin{equation}\label{constlin}
D_z\mathfrak F(\alpha,\beta,\overline{x})z = \phi\, D_w \Phi(\alpha,w(\alpha),x(\alpha)) + D_x \Phi(\alpha,w(\alpha),x(\alpha)) y_0,
\end{equation}
where $\phi=\hat\pi_\alpha(z)\in\mathbb{R}$ and $y_0=K(z)\in V$ satisfies $g^\dagger(\alpha)\bullet y_0=0$. Due to \eqref{constlin},
from assumption (A3) (see \eqref{xxx}), one obtains that
$D_z\mathfrak F(\alpha,\beta,\overline{x})_{|V}$ 
is invertible in a neighborhood
of the point $\alpha=\overline\alpha$.
Therefore (cf. Step (c) of the proof of Proposition \ref{prop-bifurcation-relative-equib}),  $D_z\mathfrak F(\alpha,\beta,\overline{x})_{|V}$ 
does not affect the existence of maximal twisted orbit types in \eqref{eq:twisted-deg-K} and, therefore, 
is of no consequence for the analysis of maximal twisted orbit types of relative periodic solutions.

On the other hand,  $D_z\mathfrak F(\alpha,\beta,\overline{x})_{|\mathscr W_l}$ acts as follows (cf. \eqref{eq:decomp-Sobolev}):
\begin{equation}\label{prb}
  D_z\mathfrak F(\alpha,\beta,\overline{x})y_l = D_{\bold x}f(\alpha,e^{w(\alpha) \theta J}x(\alpha))\, e^{(w(\alpha)J+i \beta l \,{\rm Id})\theta}y_l - \left(w(\alpha)J + i\beta l\,{\rm Id}\right)y_l.
\end{equation}
Also, since $ D_z\mathfrak F(\alpha,\beta,\overline{x})$ is $\mathcal K$-equivariant, it preserves $\mathcal K$-isotypical decompositions of $\mathscr W_l$ for all $l$.
Take $l = 1$ and consider decomposition \eqref{decomp'}. For the restriction  $D_z\mathfrak F(\alpha,\beta,\overline{x})_{|U_{j,1}}$, one has:
$$
\Delta_{j}(\alpha,\beta):= {\rm det}_{\mathbb C}\, D_z\mathfrak F(\alpha,\beta,\overline{x})_{|U_{j,1}} = \overline{\mathcal P}_{j,1} (\alpha,i\beta).
$$
Therefore, the degree of the planar vector field $\Delta_j$ equals the crossing number
\eqref{crossingnumber'}. 

Applying the same argument as in Step (c) of the proof of Proposition \ref{prop-bifurcation-relative-equib} completes the proof of Theorem \ref{t1}.

\vs

%%%%%%%%%%%Isert 2

\section{DDE Model of a Symmetric Configuration of Passively Mode-Locked Semiconductor 
	Lasers}
\subsection{Mathematical model}
In \cite{laser}, a model for a mode-locked semiconductor laser with gain and absorber sections was introduced as a system of the following delay differential equations:
\begin{equation}\label{eq:ML-a}
\begin{cases}
\dot g(t) =g_0-\gamma_g g(t) -\frac 1{E_g} e^{-q(t)}(e^{g(t)}-1)|a(t)|^2, \\
\dot q(t)=q_0-\gamma_qq(t)-\frac 1{E_q}\left( 1-e^{-q(t)}  \right)|a(t)|^2,\\
\dot a(t)=-\gamma a(t)+\gamma\sqrt \kappa \exp\left[\frac{(1-i  \eta_g)g(t-T)- (1-i  \eta_q)q(t-T)}2\right]a(t-T).
\end{cases}
\end{equation}
The complex-valued function $a(t)$ is the field amplitude at the entrance of the absorber section with $|a(t)|^2$ representing the optical power. The real-valued functions $g(t)$ and $q(t)$  represent saturable gain and losses, respectively, and $  \eta_g$, $  \eta_q$ are the linewidth enhancement factors corresponding to self-phase modulation. The constants $g_0$ and $q_0$ stand for unsaturated gain and absorption. The constants 
$\gamma_g$ and $\gamma_q$ are the carrier density relaxation rates in the gain and absorbing sections;
$E_g$ and $E_q$ are the saturation energies in the these sections; the ratio  $s={E_g}/{E_q}$ is important 
for laser dynamics. Finally, $T$ stands for the cold cavity round-trip time, and $\sqrt \kappa$ is the linear non-resonant attenuation factor per pass. 
The parameter $g_0$ is proportional to the pump current, which is the physical control parameter.

Assume $
%v(t) := 
(g(t),q(t),a(t))^\top\in \mathbb R\oplus \br\oplus \bc\simeq \br^4 =: \mathscr V$ and equip  $\mathscr V$  with the natural $S^1$-representation 
(trivial on $(g,q)$-components and complex multiplication on $a$-component). 
Clearly, system
\eqref{eq:ML-a} is $S^1$-equivariant.
In what follows, assuming  the value $\alpha:=g_o$ to be the bifurcation parameter, we will show how Proposition \ref{prop-bifurcation-relative-equib}  
(resp. Theorem \ref{t1}) can be used to study bifurcations of relative equilibria (resp. relative periodic solutions) for the 
network of identical oscillators \eqref{eq:ML-a} coupled in a $D_n$-symmetric fashion.  

\subsection{$D_n$-configuration of identical semiconductor lasers}\label{subsect-D-n-configuration} 
Let  $\mathfrak f : \mathbb R\times C([-T,0];\mathscr V)\to\mathscr  V$ be the map induced by the right-hand side of system \eqref{eq:ML-a}.
Put $ V:= \mathscr  V^n$ and define the map $f_o:\br\oplus C([-T,0];V)\to V$ by
\begin{equation}\label{eq:f_o}
f_o(\alpha, x_t)=\Big(\mathfrak f(\alpha,x^0_t),\mathfrak f(\alpha,x^1_t),\dots,\mathfrak f(\alpha,x^{n-1}_t)\Big)^\top,
\end{equation}
where $x=(x^0,x^1,\dots,x^{n-1})^\top \in V$. Take the linear operator $C:\mathscr V\to \mathscr V$ with the matrix
 \begin{equation}\label{eq:matrix-C}
 C:=\left[\begin{array}{ccc}0&0&0\\
 0&0&0\\
 0&0&e^{i\psi}   \end{array}  \right]
 \end{equation}
and let $\mathscr C:V\to V$ be given by the block matrix
\begin{equation}\label{eq:C}
\mathscr C:=\left[\begin{array}{cccccc}0&C&0&\dots&0&C\\
C&0&C&\dots&0&0\\
0&C&0&\dots&0&0\\
\vdots&\vdots&\vdots&\ddots&\vdots&\vdots\\
0&0&0&\dots&0&C\\
C&0&0&\dots&C&0
    \end{array}   \right].
\end{equation}
We are interested in solutions to the system 
\begin{equation}\label{eq:f-Dn} 
\dot{x} = f(\alpha,\eta, x,x_t):= f_o(\alpha,x_t)+\eta \mathscr C x,\quad x\in V,\; \alpha,\,\eta\in \br,
\end{equation}
where $\eta$ stands for the strength of coupling.

Clearly, the space $V$ is an orthogonal $D_n\times S^1$-representation for $\Gamma:=D_n$, where $D_n$-action on $V$ is defined by permutation of the coordinates of the vector $x\in V$. More precisely, $D_n$ stands for the dihedral group being  the group of symmetries of a regular $n$-gone, i.e. one can consider it to be a subgroup of the symmetric group $S_n$ of $n$-vertices $\{0,1,\dots,n-2,n-1\}$  of the regular $n$-gone. This group is generated by
the ``rotation'' $\xi:= (0,n-1,n-2,\dots,1)$ and the ``reflection'' $\boldsymbol\kappa:=(1,n-1)(2,n-2)\dots(m,n-m)$, where $m=\left\lfloor \frac{n-1}2\right\rfloor$. Then, the $D_n\times S^1$-action on  $V$ is given by
\begin{equation}\label{eq:act1}
(h, e^{i\tau})x= (e^{i\tau}x^{h(0)},e^{i\tau}x^{h(1)},\dots,e^{i\tau}x^{h(n-1)})^{\top}, \quad e^{i\tau}\in S^1,\, h \in D_n,
\end{equation}
where $x=(x^0,x^1,\dots,x^{n-1})^\top\in V$ and $e^{i\tau}$ acts on $x^i \in \mathscr  V \simeq \mathbb R \oplus \mathbb R \oplus \mathbb C$ trivially on the first two components and by complex multiplication on the $\mathbb C$-component ($i = 0,...,n-1$). Obviously, system \eqref{eq:f-Dn} satisfies condition (A0). 
\begin{remark}\label{rem:symmetries-relative-equilib}
{\rm Recall, if ${\bf S}(\overline{x})$ is a relative equilibrium for system  \eqref{eq:f-Dn}, then symmetries of ${\bf S}(\overline{x})$ are completely determined by a (twisted) isotropy subgroup $\mathcal G_{\overline{x}}$ with respect to the $\mathcal G : = D_n \times S^1$-action.
} 
\end{remark}

Observe that $x_o(\alpha):=  \big(\frac{\alpha}{\gamma_g}, \frac{q_0}{\gamma_q},0\big) \in \mathscr  V$ is an equilibrium of system \eqref{eq:ML-a} for any $\alpha$, hence 
\[
\mathcal O(\alpha):= (x_o(\alpha),x_o(\alpha),...,x_o(\alpha)) \in V
\] is an equilibrium of system \eqref{eq:f-Dn} for any $\alpha$. Also
(cf. \eqref{eq:linearization-matrix}),
%the linearization of the right-hand side of \eqref{eq:ML-a} at $x_o(\alpha)$ is given by:
\begin{equation}\label{eq:linearization-at-x0}
{D}_{\bold x} \mathfrak f(\alpha, x_o(\alpha))_{|{\mathscr  V}} = \left[\begin{array}{ccc}   
-\gamma_g &0&0\\
0&-\gamma_q&0\\
0&0&\left(\sqrt \kappa \exp\left[\frac{(1-i  \eta_g)\frac{\alpha}{\gamma_g} - (1-i  \eta_q) \frac{q_0}{\gamma_q} }2\right]-1\right)\gamma
\end{array}   \right].
\end{equation}

\subsection{Bifurcation of symmetric relative equilibria}\label{subsec:bif-rel-equil} 

{Hereafter,} for the sake of simplicity, we will restrict ourselves to the case $n = 8$.

\medskip
\noindent
{\bf Isotypical decomposition and maximal twisted orbit types.} 
To apply Proposition  \ref{prop-bifurcation-relative-equib} for studying relative equilibria bifurcating from the the equilibrium $\mathcal O(\alpha)$, observe that $V$ admits 
the isotypical $D_8$-decomposition:
\begin{equation}\label{eq-D_n-decomp}
V = \bigoplus_{j=0}^4 W_j,
\end{equation} 
where $W_j$ is modeled on $\mathcal V_j$,\ $\mathcal V_1$ is a one-dimensional trivial representation, $\mathcal V_4$ is a one-dimensional $D_8/D_4$-representation and $\{\mathcal V_j\}_{j=1}^3$ are three two-dimensional non-equivalent irreducible representations with different actions of the rotational generator (see \cite{AED} for details). Observe also (see \eqref{isot-Gamma-S1}) that decomposition \eqref{eq-D_n-decomp} can be refined to the
$D_8 \times S^1$-decomposition: $W_j = V_j^0 \oplus V_{j,1} $, $j = 0,...,4$, where $V_j^0$ is modeled on $\mathcal V_j \simeq \mathbb R^2$ and $S^1$ acts trivially, while $V_{j,1}$
is modeled on $\mathcal V_{j,1} \simeq \mathbb C^2$ and $S^1$ acts by complex multiplication (see \cite{AED}). Clearly, $\dim W_0 = \dim W_4 = 4$, while $\dim W_1 = \dim W_2 = \dim W_3 = 8$. 

Let us now describe maximal twisted orbit types in $V$. By inspection, for any $j = 0,1,2,3,4$, if $(\mathcal H_o)$ is a maximal orbit type in $V_{j,1}$, then  
$(\mathcal H_o)$ is a maximal twisted type in $V$ (cf. Proposition \ref{prop-bifurcation-relative-equib}, assumption (i)). In turn, the list of maximal twisted types in any $V_{j,1}$ is given by 

\smallskip

for $V_{0,1}$: \quad $(D_8 \times \{1\}) \simeq (D_8)$;

\smallskip

for $V_{1,1}$: \quad $(\mathbb Z_8^{t_1}), (D_2^d), (\widetilde{D}_2^d)$; 

\smallskip

for $V_{2,1}$: \quad $(\mathbb Z_8^{t_2}), (D_4^d), (\widetilde{D}_4^d)$; 

\smallskip

for $V_{3,1}$: \quad $(\mathbb Z_8^{t_3}), (D_2^d), (\widetilde{D}_2^d)$; 

\smallskip

for $V_{4,1}$: \quad $(D_8^d)$.

\smallskip\noindent
We refer to Subsection \ref{notation:0} the Appendix for explicit description of all these subgroups,
see also Remark \ref{rem:symmetries-relative-equilib}.

\medskip
\noindent
{\bf Equivariant spectral reduction and condition (A1).}
The linearization ${D}_{\bold x} f_o(\alpha,{\bold x}): \br\times C_{-r}(V)_{|V} \to V$ of system \eqref{eq:f-Dn} at  $\mathcal O(\alpha)$ 
respects isotypical decomposition 
\eqref{eq-D_n-decomp}. To describe its action on isotypical components, 
define a (real) $4 \times 4$-matrix $\xi$ by
\begin{equation}\label{eq:matrix-ksi}
 \xi :=\left[\begin{array}{ccc}1&0&0\\
 0&1&0\\
 0&0&e^{i{2\pi \over 8}}   \end{array}  \right]
\end{equation}
and put 
\begin{equation}\label{eq:A-i}
A_j := {D}_{\bold x} \mathfrak f(\alpha, x_o(\alpha))_{| \mathcal V} +  \eta C (\xi^j + \xi^{-j}), \quad\quad  j = 0,1,2,3,4
\end{equation}
(cf. \eqref{eq:linearization-at-x0}
and \eqref{eq:matrix-C}; it should be stressed that $A_j$ is considered here as a {\it real} $4 \times 4$-matrix). 
Then,  
\begin{equation}\label{eq:lineariza-f_o}
{D}_{\bold x} f_o(\alpha, \mathcal O(\alpha))_{| W_j}   = 
 \begin{cases}
 A_j \quad\quad\quad\quad\quad \;\, {\rm if} \; j = 0,4,\\
 \left[\begin{array}{cc}A_j & 0\\
 0 & A_j\\
 \end{array}  \right] \quad {\rm if}\; j = 1,2,3.
 \end{cases} 
\end{equation}
Since the action of $S^1$ on $(g,q)$-components of \eqref{eq:ML-a} is trivial, it follows from \eqref{eq:linearization-at-x0} and \eqref{eq:matrix-ksi}-\eqref{eq:lineariza-f_o} that $\det({D}_{\bold x} f_o(\alpha, \mathcal O(\alpha))_{| V^{S^1}}) = (\gamma_g \gamma_q)^8 \not = 0$, hence 
(see \eqref{eq:quasipolynomials}), $\mathcal P_0(\alpha,0, \mathcal O(\alpha)) \not= 0$ so that system \eqref{eq:f-Dn} satisfies condition (A1).  

\medskip
\noindent
{\bf Characteristic quasi-polynomial.}
Next, let us consider the characteristic quasi-polynomial  $\mathcal P_{\ast}(\alpha,\lambda, \mathcal O(\alpha))$ (see \eqref{eq:quasipolynomials}).  
For any $j = 0,1,2,3,4$, put
\begin{equation}\label{eq:mm}
\widetilde{\mathcal P_j} := \lambda +  \gamma - \gamma \sqrt \kappa \exp\left[\left( \frac{\alpha}{2\gamma_g} - \frac{q_0}{2 \gamma_q}\right) 
+ i \left( \frac{\eta_q q_0}{2 \gamma_q} - \frac{\eta_g \alpha}{2 \gamma_g}\right)\right]e^{-\lambda T} + 2\eta\cos{2\pi j \over 8} e^{i \psi}.
\end{equation}
Then, the restriction of the characteristic quasi-polynomial to $V_{j,1}$ reads
\begin{equation}\label{eq:quasi-polynomial-restriction}
\mathcal P_{j,1}(\alpha,\lambda,\mathcal O(\alpha)) = 
\begin{cases}
\widetilde{\mathcal P_j}, \quad {\rm if} \;\;\;\;\; j = 0,4,\\
(\widetilde{\mathcal P_j})^2 \quad {\rm if}\;\; j = 1,2,3,
\end{cases}
\end{equation}
so that 
\begin{equation}\label{eq:quasi-polynomial-full-S}
\mathcal P_{\ast}(\alpha,\lambda,\mathcal O(\alpha)) = \prod_{j = 0}^4 \mathcal P_{j,1}(\alpha,\lambda,\mathcal O(\alpha)). 
\end{equation}

\medskip
%%%  BEGIN WIESLAW's 
\noindent
{\bf Condition (A2): existence of centers.}
In order to simplify the notations (cf. \eqref{eq:mm}), put
\begin{equation}\label{eq:x-alpha}
x(\alpha):= \frac{\alpha}{2\gamma_g}-\frac{q_o}{2\gamma_q},\qquad y(\alpha):= \frac{  \eta_qq_o}{2\gamma_q}-\frac{  \eta_g \alpha}{2\gamma_g},
\end{equation}
and
\begin{equation}\label{eq:a-j}
a_j+ib_j:= 2\eta e^{i\psi} \cos \frac{2\pi j}{8}.
\end{equation}
Let us identify the values of $\alpha$ for which $\mathcal O(\alpha)$ is a center, i.e. we are looking for those values of $\alpha$ for which there exists $w>0$ such that 
%\begin{equation}\label{eq:center1}
$\mathcal P_*(\alpha,iw,\mathcal O(\alpha))=0$.
%\end{equation}
%Clearly, \eqref{eq:center1} is equivalent to the following equation 
Equivalently (cf.~\eqref{eq:x-alpha}-\eqref{eq:a-j}), 
\[
iw=-\gamma+\gamma \sqrt \kappa \exp(x(\alpha) +i(y(\alpha)-wT)) +a_j+ib_j, \quad j=0,1,2,3,4.
\] 
%which can be written as the system of equations
%\begin{equation}\label{eq:center2}
%\begin{cases}
%0=-\gamma+\gamma\sqrt \kappa e^{x(\alpha)}\cos(y(\alpha)-wT)+a_j\\
%w=\gamma\sqrt \kappa e^{x(\alpha)}\sin(y(\alpha)-wT)+b_j,
%\end{cases}
%\end{equation}
%or, equivalently, 
%\begin{equation}\label{eq:center-system}
%\begin{cases}
%\cos (y(\alpha)-wT)=\frac {\gamma-a_j}{\gamma\sqrt \kappa e^{x(\alpha)}},\\
%\sin(y(\alpha)-wT)=\frac{w-b_j}{\gamma\sqrt\kappa e^{x(\alpha)}}.
%\end{cases}
%\end{equation}
%Obviously, \eqref{eq:center-system} yields 
%\[
%1=\left(\frac{\gamma-a_j}{\gamma\sqrt\kappa e^{x(\alpha)}}   \right)^2+\left(\frac{w-b_j}{\gamma\sqrt \kappa %e^{x(\alpha)}}\right)^2
%\]
%which can be  solved with respect to $w$ as follows:
%\begin{equation}\label{eq:center3}
%w(\alpha):=\gamma\sqrt\kappa e^{x(\alpha)}\sqrt{1-\frac{(\gamma-a_j)^2}{\gamma^2\kappa e^{2x(\alpha)}}}+b_j.
%\end{equation}
%Next, from system \eqref{eq:center-system} one obtains:
This complex equation can be reduced to the real equation
\begin{equation}\label{eq:center4}
\tan(y(\alpha)-w(\alpha)T)=\frac {w(\alpha)-b_j}{\gamma - a_j}, \quad j=0,1,2,3,4,
\end{equation}
with
\begin{equation}\label{eq:center3}
w(\alpha):=\gamma\sqrt\kappa e^{x(\alpha)}\sqrt{1-\frac{(\gamma-a_j)^2}{\gamma^2\kappa e^{2x(\alpha)}}}+b_j.
\end{equation}

For $\alpha$ large enough, the right-hand side of \eqref{eq:center4} is close to 
${\gamma\sqrt\kappa \over \gamma - a_j}e^{ \frac{\alpha}{2\gamma_g}-\frac{q_o}{2\gamma_q}}$
(see \eqref{eq:x-alpha} and \eqref{eq:center3}). Combining this with periodicity of the function tangent,  one concludes
that   \eqref{eq:center4} has infinitely many 
solutions $\alpha$ together with the corresponding limit frequencies $w(\alpha)$. 

\begin{proposition}\label{prpr}
	Suppose $\alpha=\alpha_o^j$ is a root of \eqref{eq:center4}, \eqref{eq:center3} for some  $j=0,1,2,3,4$ and
	\begin{equation}\label{eq-transvers}
	\gamma > 2\eta \cos(\psi) \cos{2\pi j \over 8} \qquad {\rm and} \qquad  \omega(\alpha_o^j) > 2\eta \cos(\psi) \sin{2\pi j \over 8}.
	\end{equation}
%	for $j=0,1,2,3,4$.
Then, the following continuous branches of relative equilibria  
bifurcate from the equilibrium $(\alpha_o^j,\mathcal{O}(\alpha_o^j))$ of equation \eqref{eq:f-Dn}:

\smallskip

for $j=0$, a branch with symmetry $(D_8)$;

\smallskip

for $j=1$, two branches with symmetry $(\mathbb Z_8^{t_1})$, four branches with symmetry $(D_2^d)$ and  four branches with symmetry $(\widetilde{D}_2^d)$; 

\smallskip

for $j=2$, two branches with symmetry  $(\mathbb Z_8^{t_2})$, two branches with symmetry $(D_4^d)$ and two branches with symmetry $(\widetilde{D}_4^d)$; 

\smallskip

for $j=3$,  two branches with symmetry $(\mathbb Z_8^{t_3})$, four branches with symmetry $(D_2^d)$ and  four branches with symmetry $(\widetilde{D}_2^d)$; 

\smallskip

for $j=4$, a branch with symmetry  $(D_8^d)$.
\end{proposition}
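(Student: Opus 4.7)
The plan is to apply Proposition \ref{prop-bifurcation-relative-equib} separately at each bifurcation point $(\alpha_o^j,\mathcal O(\alpha_o^j))$, $j=0,\ldots,4$. Conditions (A0) and (A1) have already been established for system \eqref{eq:f-Dn} in the discussion preceding the proposition, so the main work is to verify (A2) and then to extract the crossing numbers together with the count of bifurcating branches associated with each maximal twisted orbit type in $V_{j,1}$.

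I would verify (A2) by implicit differentiation of \eqref{eq:mm} in $\lambda$ and $\alpha$ near $(\alpha_o^j, iw(\alpha_o^j))$. Splitting $\partial_\lambda\widetilde{\mathcal P_j}$ into real and imaginary parts, one sees that the two inequalities in \eqref{eq-transvers} translate, respectively, into $\partial_\lambda \widetilde{\mathcal P_j}(\alpha_o^j, iw(\alpha_o^j))\neq 0$ (so that $iw(\alpha_o^j)$ is a simple root, deformed by a smooth branch $\lambda(\alpha)$) and $(\mathrm{Re}\,\lambda)'(\alpha_o^j)\neq 0$ (so that this branch crosses the imaginary axis transversally as $\alpha$ varies through $\alpha_o^j$). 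For $j'\neq j$, the quasi-polynomial $\widetilde{\mathcal P_{j'}}$ differs from $\widetilde{\mathcal P_j}$ only by the additive constant $(a_{j'}-a_j)+i(b_{j'}-b_j)$ appearing in \eqref{eq:a-j}; a direct substitution shows $\widetilde{\mathcal P_{j'}}(\alpha_o^j,iw(\alpha_o^j))\neq 0$, so no other isotypical component contributes an imaginary root near $\alpha_o^j$, securing (A2).

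The crossing numbers then follow immediately from \eqref{eq:quasi-polynomial-restriction}: one has $\mathfrak t_{j,1}(\alpha_o^j, w(\alpha_o^j))=\pm 1$ for $j=0,4$ and $\mathfrak t_{j,1}(\alpha_o^j, w(\alpha_o^j))=\pm 2$ for $j=1,2,3$ (the factor $2$ reflecting the algebraic multiplicity in $(\widetilde{\mathcal P_j})^2$), while $\mathfrak t_{j',1}(\alpha_o^j, w(\alpha_o^j))=0$ whenever $j'\neq j$. In particular, hypothesis (ii) is satisfied by $V_{j,1}$ itself and hypothesis (iii) holds automatically for any maximal twisted orbit type $(\mathcal H_o)$ in $V_{j,1}$ (the sign product either vanishes or collapses to a single nonzero term). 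By Remark \ref{rem-maximal-twisted-isot-comp}, each such $(\mathcal H_o)$ is simultaneously a maximal twisted type in the full space $V$. For every $j$, running through the list of maximal twisted types in $V_{j,1}$ given just before the proposition and performing a routine coset computation from the subgroup descriptions in the Appendix yields $|\mathcal G/\mathcal H_o|_{\bold S}=1$ for $(D_8)$ and $(D_8^d)$, $=2$ for each of $(\mathbb Z_8^{t_k})$, $(D_4^d)$, $(\widetilde D_4^d)$, and $=4$ for $(D_2^d)$, $(\widetilde D_2^d)$, reproducing exactly the tallies in the statement.

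The principal technical obstacle I anticipate is the careful extraction of the simple-root-plus-transversal-crossing content from \eqref{eq-transvers} via \eqref{eq:mm}, and the auxiliary nonvanishing check for $\widetilde{\mathcal P_{j'}}$ at $(\alpha_o^j,iw(\alpha_o^j))$ when $j'\neq j$; once these implicit-function-theorem ingredients are in hand, the remainder of the argument is bookkeeping on isotypical components and finite-group coset counts.
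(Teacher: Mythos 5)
Your proposal is correct and follows essentially the same route as the paper: the authors likewise verify (A2) by implicitly differentiating the characteristic equation \eqref{eq:mm} (split into real and imaginary parts as in \eqref{eq:center5}) to obtain the explicit formula \eqref{eq:center9} for $\boldsymbol r'(\alpha_o^j)$, deduce transversality from \eqref{eq-transvers}, note that condition (iii) of Proposition \ref{prop-bifurcation-relative-equib} holds, and then read off the branches from the maximal twisted orbit types and the $\bold S$-orbit counts exactly as you do. The only cosmetic difference is that the paper uses the two inequalities in \eqref{eq-transvers} jointly to conclude $\boldsymbol r'(\alpha_o^j)>0$ rather than assigning them separately to a simple-root condition and a transversality condition, but this does not change the substance of the argument.
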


%{\bf Condition (A2): isolatedness of centers.}
\begin{proof}
Let us show that the center $\mathcal{O}(\alpha_o^j)$ is isolated (cf.~condition (A2)). 
Put $\lambda(\alpha):=\boldsymbol r(\alpha)+iw(\alpha)$ and rewrite the characteristic equation as follows
(cf. \eqref{eq:mm}-\eqref{eq:a-j}):
\begin{equation}\label{eq:center5}
\begin{cases}
\boldsymbol r(\alpha)=-\gamma+\gamma \sqrt\kappa e^{x(\alpha)-\bold r(\alpha)T}\cos (y(\alpha)-w(\alpha)T)+a_j,\\
w(\alpha)=\gamma \sqrt \kappa e^{x(\alpha)-\boldsymbol r(\alpha)T}\sin(y(\alpha)-w(\alpha)T)+b_j,
\end{cases} 
\end{equation}
where $j=0,1,2,3,4$. 
Assume that for $\alpha=\alpha_o$, the equilibrium $\mathcal O(\alpha_o)$ is a center with the limit frequency $w(\alpha_o)=w_o$
and put 
\begin{equation}\label{eq:notation}
x_o:=x(\alpha_o^j),\quad y_o:=y(\alpha_o^j),\quad x_o':=x'(\alpha_o^j)=\frac 1{2\gamma_g},\quad y_o':=y'(\alpha_o^j)=-\frac{\eta_g}{2\gamma_q}.
\end{equation} 
%Then, \eqref{eq:center5} reads:
%\begin{equation}\label{eq:center6}
%\begin{cases}
%\gamma \sqrt\kappa e^{x_o}\cos(y_o-w_oT)&=\gamma-a_j\\
%\gamma \sqrt\kappa e^{x_o}\sin(y_o-w_oT)&=w_o-b_j,
%\end{cases}
%\end{equation}
%$j=0,1,2,3,4$.
%Then, 
Differentiating \eqref{eq:center5} with respect to $\alpha$, 
%substituting $\alpha=\alpha_o$ and combining this with relations \eqref{eq:center6},
one obtains
%\begin{equation}\label{eq:center7}
\[
\begin{cases}
\boldsymbol r'(\alpha_o^j)&= (\gamma-a_j)(x'_o-\boldsymbol r'(\alpha_o^j)T)-(w_o-b_j)(y'_o-w'(\alpha_o^j)T),\\
w'(\alpha_o^j)&=(w_o-b_j)(x'_o-\boldsymbol r'(\alpha_o^j)T)+ (\gamma-a_j)(y'_o-w'(\alpha_o^j)T),
\end{cases}
%\end{equation}
\]
which leads to 
%\begin{equation*}\label{eq:center8}
%\begin{cases}
%\boldsymbol r'(\alpha_o)(1+(\gamma-a_j)T)-w'(\alpha_o)(w_o-b_j)T&=(\gamma-a_j)x'_o-(w_o-b_j)y'_o,\\
%\boldsymbol r'(\alpha_o)(w_o-b_j)T+w'(\alpha_o)(1+(\gamma-a_j)T) &=(w_o-b_j)x'_o+(\gamma-a_j)y'_o.
%\end{cases}
%\end{equation*}
%Hence,
\begin{equation}\label{eq:center9}
\boldsymbol r'(\alpha_o^j)=\frac{[(\gamma-a_j)^2x'_o+(w_o-b_j)^2y'_o]T+(\gamma-a_j)x_o'-(w_o-b_j)y_o'}{(1+(\gamma-a_j)T)^2+(w_o-b_j)^2T^2}.
\end{equation}
%where  $j=0,1,2,3,4$. 
Formulas \eqref{eq:notation}, \eqref{eq:center9}  show that $\boldsymbol r'(\alpha_o^j) > 0$
provided that relations \eqref{eq-transvers}
are satisfied.
Hence, relations \eqref{eq-transvers} guarantee that the transversality condition for $\lambda(\alpha^j)$ 
is satisfied at $\alpha = \alpha_o^j$, in which case the center $\mathcal O(\alpha_o^j)$ is isolated.  Moreover, 
relations  \eqref{eq-transvers}  imply that condition (iii) from Proposition \ref{prop-bifurcation-relative-equib} is satisfied. Since the other conditions have been verified, the result follows.
\end{proof}

\vs
Recall that $\eta$ stands for the coupling strength.
In particular, conditions \eqref{eq-transvers} are satisfied for all $j$ for any relatively weak coupling.

%\medskip
%\noindent
%{\bf Isotypical crossing.}  To effectively apply Proposition \ref{prop-bifurcation-relative-equib}, it remains to analyze the isotypical crossing of characteristic roots as $\alpha$ crosses $\alpha_o$. 

Table \ref{zero:eq:tab} illustrates Proposition \ref{prpr}.
Assume that 
%, for the sake of definiteness, that $ 0.036  \leq \alpha \leq 0.03622$ and 
$\eta = 2$, $\alpha_g = 1$, $\alpha_q = 1$, $\gamma_g = 10^{-2}$, $\gamma_q = 1$, $\gamma=15$, $\kappa = \sqrt{0.2}$, $q_0 = 2$, $E_g=1$, $E_q=0.1$, $T=2.5$. For this set of parameters conditions \eqref{eq-transvers} are fulfilled for all $(\alpha,\omega(\alpha))$ satisfying equations \eqref{eq:center4}, \eqref{eq:center3}.
For $\alpha<0.036$, the equilibrium $\mathcal{O}(\alpha)$ is stable. %Isotypical crossing of characteristic roots through the imaginary axis is described in Table \ref{zero:eq:tab}. More precisely, in this table
In Table \ref{zero:eq:tab}, we localize Hopf bifurcation points along the horizontal direction, and specify isotypical components $V_{j,1 }$  along the vertical direction. In each cell, we indicate the 
number of unstable roots of the corresponding characteristic polynomial $\mathcal{P}_{j,1}$ defined by \eqref{eq:quasi-polynomial-restriction}.
%dimension of the corresponding unstable manifold, so that 
One can easily see a change of stability as $\alpha$ increases. 
An entry of the table is circled to indicate  a ``jump" in the number
of unstable roots and hence a Hopf bifurcation point. In particular,   Proposition \ref{prpr}  guarantees
Hopf bifurcations of branches of relative equilibria as follows:
\begin{enumerate}[(i)]
\item with symmetry $(D_8)$ for $ %0.036065 < 
\alpha\approx 0.03606$;
\item  with symmetries $(\mathbb Z^{t_1}_8), (D_2^d)$ and  $(\widetilde{D}_2^d)$ for $ %0.036075 < 
\alpha \approx 0.03607$;
\item with symmetries $(\mathbb Z^{t_2}_8), (D_4^d)$ and  $(\widetilde{D}_4^d)$ for $ %0.036095 < 
\alpha \approx 0.0361$;
\item with symmetries $(\mathbb Z^{t_3}_8), (D_2^d)$ and  $(\widetilde{D}_2^d)$ for $ %0.036135 < 
\alpha \approx 0.03613$;
\item with symmetry $(D_8)$ for $%0.03618 < 
\alpha \approx 0.03617$;
\item with symmetries $(D_8^d)$ and $(\mathbb Z^{t_1}_8)$ for $%0.036205 < 
\alpha \approx 0.0362$,
\end{enumerate}
\label{pageI}

\noindent
to mention a few (see Proposition \ref{prpr} for the number of branches of each type).

\begin{table}[ht!]
{\setlength{\tabcolsep}{0.5em} 
\centering
\caption{Number of unstable eigenvalues in each isotypical component for the equilibrium $\mathcal{O}(\alpha)$\label{zero:eq:tab}}
{\small
\begin{tabular}{|c|c|c|c|c|c|c|c|c|c|c|c|c|c|c|c|c|c|}
\cline{3-9}
\multicolumn{2}{c|}{} &\multicolumn{7}{c|}{Intervals for values of parameter $\alpha \; \cdot 10^2$ } \\ \cline{3-9}
\multicolumn{2}{c|}{} & \scriptsize $[3.6,\; 3.606]$ & \scriptsize $[3.6065, \; 3.607]$ & \scriptsize $[3.6075,\; 3.6095]$ & \scriptsize $[3.61,\; 3.613]$ &  \scriptsize $[3.6135,\; 3.617]$ &  \scriptsize $[3.618,\; 3.62]$ &  \scriptsize $[3.6205,\;3.622]$ \\ \hline 
\multirow{6}{*}{\rot{Isotypical component}}&$V_{0, 1}$ & 0 & \circled{2} & 2 & 2 & 2 & \circled{4} & 4  \\ \cline{2-9}
&$V_{1, 1}$ & 0 & 0 & \circled{4} & 4 & 4 & 4 & \circled{8}  \\ \cline{2-9}
&$V_{2, 1}$ & 0 & 0 & 0 & \circled{4} & 4 & 4 & 4  \\\cline{2-9}
&$V_{3, 1}$ & 0 & 0 & 0 & 0 & \circled{4} & 4 & 4 \\ \cline{2-9}
&$V_{4, 1}$ & 0 & 0 & 0 & 0 & 0 & 0 & \circled{2}   \\ \cline{2-9}
&$ \bigoplus\limits_{j=0}^4 V_{j, 1}$ & 0 & 2 & 6 & 10 & 14 & 16 & 22 \\ \hline
\end{tabular}
}}
\end{table}

\subsection{Bifurcation of relative periodic solutions} 
%\paragraph{$D_n$-Symmetric  Relative Equilibria for \eqref{eq:n-lasers}:}  
\subsubsection{Application of Theorem \ref{t1} to the laser system}\label{s6}
In this subsection, we show how Theorem \ref{t1} can be applied to classify symmetries of relative periodic solutions, which bifurcate from branches of relative equilibria of system \eqref{eq:f-Dn} with $n=8$. 
We  restrict the presentation to bifurcations from  relative equilibria that have 3 particular types of symmetry, $(D_8)$, $(\mathbb{Z}^{t_1}_8)$, and $(D_8^d)$. These branches are listed under the items (i), (ii), and (vi), respectively, on page \pageref{pageI}.
%Proposition \ref{prpr}. 
Further, an infinite number of Hopf bifurcations of relative periodic solutions occurs along each branch 
of relative equilibria. To be specific, we consider a few successive Hopf bifurcations at the beginning of each branch of our choice. 
%Our exposition is parallel to the previous subsection. However, 
In contrast to 
the  application of Proposition \ref{prop-bifurcation-relative-equib} to studying bifurcation of relative equilibria (in which case, all the necessary symbolic computations were explicitly presented),  
%symbolic computations required for the application of 
we have to resort to numerical computations for verifying conditions (A3), (A5) and (ii), (iii)
of Theorem  \ref{t1}.
% will be presented only in part, while the rest will be supported by numerical evidence. 

Based on the numerical evidence, Theorem \ref{t1} allows us to predict the following bifurcations of branches of relative periodic solutions. 

\vs
% Given a relative equilibrium  $\bold{S}(\overline{x})$ with  $(\mathcal G_{\overline x}) = (D_n \times \{1\})$ and assuming $0.0361 < \alpha < 0.0897$, Theorem \ref{t1} allows us to predict a bifurcation of relative periodic solutions (we refer to Subsection \ref{notation:1} for the notation):   
Consider the $(D_8)$-symmetric branch of relative equilibria, which is denoted by (i) on page \pageref{pageI}.
The following branches of relative periodic solutions bifurcate from this branch  (we refer to Section \ref{app:1} for the notation):
	\begin{enumerate}[(i)]
		\item with symmetries   $(\boldsymbol{\mathbb Z_8^{t_1}}), (\boldsymbol{D_2^d}), (\boldsymbol{\widetilde{D}_2^d})$  for $%0.0385 \leq 
		\alpha \approx 0.0386$;                                                
		\item  with symmetries    $(\boldsymbol{\mathbb Z_8^{t_2}}), (\boldsymbol{D_4^d}), (\boldsymbol{\widetilde{D}_4^d})$   for $%0.0532 \leq  
		\alpha \approx  0.0533$;
		\item  with symmetry  $(\boldsymbol {D_8})$ for  $%0.0601 \leq  
		\alpha \approx  0.0602$.
	\end{enumerate}
	
	\vs
	Consider the $(\mathbb Z_n^{t_1})$-symmetric branch of relative equilibria, which is denoted by (ii) on page \pageref{pageI}.
	The following branches of relative periodic solutions bifurcate from this branch:
	\begin{enumerate}[(i)]
		\item with symmetries $(\boldsymbol{\bz_8^{t_1}})$ and $(\boldsymbol{\bz_8^{t_2}})$ for $%0.0365 \leq 
		\alpha \approx 0.0366$;
		\item with symmetry $(\boldsymbol{\bz_8^{t_3}})$ for  $%0.0398  \leq 
		\alpha \approx 0.0399$;
		\item with symmetries $(\boldsymbol{\bz_8^{t_1}})$ and $(\boldsymbol{\bz_8^{t_3}})$   for  $ %0.0415     \leq
		\alpha\approx    0.0416 $;
		\item with symmetry $(\boldsymbol{\bz_8^c})$   for  $ %0.0639   \leq 
		\alpha \approx    0.064 $;
		\item with symmetry  $(\boldsymbol{\bz_8^{t_2}})$ for  $ %0.064  \leq 
		\alpha \approx 0.0641 $;
		\item with symmetry    $(\boldsymbol{\bz_8})$ for  $ %0.0787  \leq 
		\alpha \approx   0.0788.$
	\end{enumerate}
	
	\vs
Consider the $(D_8^d)$-symmetric branch of relative equilibria, which is denoted by (vi) on page \pageref{pageI}.
The following branches of relative periodic solutions bifurcate from this branch:
	\begin{enumerate}[(i)]
		\item with symmetries    $(\boldsymbol{\mathbb Z_8^{t_2}}), (\boldsymbol{D_4^d}), (\boldsymbol{\widetilde{D}_4^d})$ for $%0.0383    \leq 
		\alpha \approx 0.0384  $;
		\item with symmetry       $(\boldsymbol {D_8})$      for  $%0.0404   \leq 
		\alpha \approx   0.0405  $;
		\item with symmetries      $(\boldsymbol{\mathbb Z_8^{t_3}}), (\boldsymbol{D_2^d}), (\boldsymbol{\widetilde{D}_2^d})$   for  $%0.0538    \leq 
		\alpha \approx 0.0539    $;
		\item with symmetry       $(\boldsymbol{D_8^d})$    for  $ %0.0659   \leq 
		\alpha \approx 0.066  $;
		\item with symmetry            $(\boldsymbol{D_8^d})$     for  $%0.073 \leq 
		\alpha \approx   0.0731  $;
		\item with symmetries      $(\boldsymbol{\mathbb Z_8^{t_1}}), (\boldsymbol{D_2^d}), (\boldsymbol{\widetilde{D}_2^d})$   for  $  %0.0756  \leq
		\alpha \approx    0.0757 $.
	\end{enumerate}
	
	Further bifurcations along these and other branches of relative equilibria can be classified in a similar manner.
	 Note that branches of relative periodic solutions with symmetries $(\boldsymbol{\mathbb{Z}_8^{t_j}})$, $(\boldsymbol{D_4^d})$, $(\boldsymbol{\widetilde{D}_4^d})$
	come in pairs, while the branches with symmetries 
	$(\boldsymbol{D_2^d})$, $(\boldsymbol{\widetilde{D}_2^d})$ appear in quadruples.
	
	In the rest of the paper, we show how the above bifurcations can be deduced from Theorem \ref{t1}. 
	Given a relative equilibrium with symmetry group $\mathcal H$, the verification of assumptions of Theorem  \ref{t1} splits into the following steps: (a) finding the isotypical decomposition of $\mathcal H \times S^1$-representation \eqref{decomp'} and providing a list of maximal orbit types in each component (see Subsections \ref{s1} and \ref{subsub-isotypical}); (b) obtaining characteristic quasi-polynomials associated with each isotypical component (see Subsection \ref{s3}); and, (c) analyzing roots of the quasi-polynomials and verifying conditions (A3), (A5), (i) and (ii) of Theorem \ref{t1}	(see Subsections \ref{s5}). The last step relies on numerical computations.  Condition (A3) is reduced to an explicit inequality in Subsection \ref{s4}.      

\subsubsection{Symmetries of relative equilibria}\label{s1}
To begin with, below we will describe {some}  of the relative equilibria identified in the previous subsection 
{more explicitly}.  

Observe first that the group $D_n$ described in Subsection \ref{subsect-D-n-configuration} can be identified (for convenience) with
$D_n = \{1,\xi, ..., \xi^{n-1}, \kappa,\xi\kappa,...,\xi^{n-1}\kappa\}$, where 
\begin{equation}\label{D-n-identification}
\xi: = e^{i {2\pi \over n}} = 
\begin{bmatrix}
\cos({2\pi \over n}) & -\sin({2\pi \over n})\\ \sin({2\pi \over n}) & \cos({2\pi \over n})
\end{bmatrix} \quad {\rm and} \quad 
\kappa = \begin{bmatrix} 1 & 0 \\ 0 & -1\end{bmatrix}.
\end{equation}
Let ${\bf S}(\overline{x})$ be a relative equilibrium of system \eqref{eq:f-Dn} (cf. Remark \ref{rem:symmetries-relative-equilib}). 
Fix an integer $l$ satisfying  $0 \leq l < n$, put $\zeta:= \xi^l$ and assume that 
%Using identification \eqref{D-n-identification}, for any  $0 \le l < n$, put $\zeta := \xi^l$ and assume that
\begin{equation}\label{eq:rel-eq-l}
\overline x:=(\overline x^o,\zeta\overline x^o,\zeta^2\overline x^o,\dots,\zeta^{n-1}\overline x^o)^\top, \quad  \overline x^o = (g,q,a) \in \mathbb R\oplus \mathbb R \oplus \mathbb C \simeq \mathscr  V,\ a \not=0.
\end{equation}
One can easily verify that in this case, under the $\mathcal G := D_n \times S^1$-action, the isotropy of $\overline x$ is completely determined by the relations
 \[
 (\zeta^k,z)  \in \mathcal G_{\overline{x}}   \;\;\; \Leftrightarrow\;\;\; z\zeta^{-k}=1\;\;\; \Leftrightarrow\;\;\; z=\xi^{-lk},
 \]
 where $0\le k\le n-1$, and 
 \[
 (\kappa,z) \in \mathcal G_{\overline{x}} \;\;\; \Leftrightarrow\;\;\; l=0\;\text{ and }\; z=1.
 \]
By direct verification, $\overline{x}$ is of the form \eqref{eq:rel-eq-l} if and only if
\begin{equation}\label{eq:possible-symm-rel-eq}
(\mathcal G_{\overline x}):=
\begin{cases}
(D_n\times\{1\}) \simeq (D_n) \;&\text{ if } \; l=0,\\
%D_n^{\mathfrak d}&\text{ if } \; l=\frac n2,\\
D_n^d & \text{ if } \; l=\frac n2,\\
\bz_n^{t_l}:=\{ (\xi^k,\xi^{kl})\in D_n\times S^1: k=0,1,\dots,n-1\} &\text{otherwise.} 
\end{cases}
\end{equation}

\begin{remark}\label{eq:other-symmetric-rel-equilib} 
\rm In what follows, as in Subsection \ref{subsec:bif-rel-equil}, we will restrict ourselves to the case $n = 8$. As it was established in Subsection 
\ref{subsec:bif-rel-equil}, twisted subgroups listed in \eqref{eq:possible-symm-rel-eq} do not exhaust possible symmetries of relative equilibria of
system \eqref{eq:f-Dn}. For example, one can easily check that if 
\[\overline{x} = (\overline{x}^1,\overline{x}^2,\overline{x}^3,\overline{x}^4,-\overline{x}^1,-\overline{x}^2,-\overline{x}^3,-\overline{x}^4), \quad \quad\quad \overline{x}^i \in \mathcal V,\] 
then $(\mathcal G_{\overline{x}}) = (D_2^d)$, and if
\[\overline{x} =  (\overline{x}^1,-\overline{x}^1,\overline{x}^2,-\overline{x}^2,\overline{x}^3,-\overline{x}^3,\overline{x}^4,-\overline{x}^4), \quad \quad\quad \overline{x}^i \in \mathcal V, \] 
then 
$(\mathcal G_{\overline{x}}) = (\widetilde{D}_2^d)$. However, in order to keep our paper reasonably simple 
%(meaning that mudaki from singularity theory could understand it) 
and of appropriate size, we omit these cases. 
%(ii) Below, given a branch of relative equilibria Put $\bold H:=\mathscr G_{x(\alpha)}$.
\end{remark}
 
 %Put $\bold H:=\mathscr G_{x(\alpha)}$.

\subsubsection{$\mathcal G_{\overline{x}}$-isotypical decomposition of $V^c$ and maximal twisted orbit types}\label{subsub-isotypical}

\paragraph{(a) Identification.} In this subsubsection, we describe the $\mathcal H$-isotypical decomposition of the space $V^c$, where $\mathcal H=D_n\times \{1\}$, $D^d_n$ and $\bz_n^{t_l}$ with $0< l <\frac n2$ (cf. \eqref{eq:possible-symm-rel-eq}). We will assume that $n>2$ is an even integer and put $r:=\frac n2$. 
Notice that $D_n\times \{1\}$ and $D_n^d$ can be identified with $D_n$ while $\bz^{t_l}_n$ can be identified with $\bz_n$. 
\vs

Complex irreducible $\bz_n$-representations $\mathcal U'_j$ can be easily described: (a) the trivial representation $\mathcal U'_0=\bc$,  (b) 
$\mathcal U'_r=\bc$ with the natural antipodal action of $\bz_2:=\bz_n/\bz_r$,  and (c) $\mathcal U'_{\pm j}=\bc$, where the $\bz_n$-action is given by 
\[
\xi z=\xi^{\pm j}\cdot z, \quad z\in \bc.
\]
%(here  `$\cdot$' denotes the usual complex multiplication).
In the case of the group $D_n$, we have the following irreducible $D_n$-representations:
(a) the trivial representation $\mathcal U_0=\bc$, (b) the representation $\mathcal U_r=\bc$ with the natural antipodal action of  $\bz_2:=D_n/D_r$, 
and (c) the representations 
$\mathcal U_{ j}=\bc\oplus \bc$ for $0<j<r$ with the  $D_n$-action given by
\[
\xi(z_1,z_2)=(\xi^{j}\cdot z_1,\xi^{- j}\cdot z_2), \quad \kappa(z_1,z_2)=(z_2,z_1) \quad (z_1,z_2\in \bc).
\]
Notice that $\bz_n\subset D_n$, therefore,  for $0<j<r$, we have the decomposition
\[
\mathcal U_j=\mathcal U'_j\oplus \mathcal U'_{-j}.
\] 
We do not consider other (one-dimensional) irreducible $D_n$-representations since they are irrelevant for the decomposition of the substitutional 
$D_n$-representation we are dealing with in what follows.
%There are some other irreducible $D_n$-representations but we won't need them in our calculations.
\vs
%Then, since $\bz_n\subset D_n$, 
If $\mathcal H \simeq \mathbb Z_n$, then the complex $\mathcal H$-representation $V^c$ admits the following $\bz_n$-isotypical decomposition
\begin{align}\label{eq:Zn-decoposition}
V^c&=U_0\oplus U_1^+\oplus U_1^-\oplus \dots \oplus U_{r-1}^+\oplus U_{r-1}^-\oplus U_r,
\end{align}
where the components $U_j^{\pm }$ (resp. $U_0$ and $U_r$) are modeled on the complex irreducible $\bz_n$-representation 
$\mathcal U'_{\pm j}$ (resp. $\mathcal U'_0$ and $\mathcal U'_r$). Furthermore,
if $\mathcal H \simeq D_n$, then 
%and therefore in the case $\mathcal H\simeq D_n$, we have 
\begin{equation}\label{eq:H-decomposition}
V^c=U_0\oplus U_1\oplus\dots\oplus U_{r-1}\oplus U_r,
\end{equation}
where $U_j=U_j^+\oplus U_j^-$ for $0<j<r$ and the isotypical component $U_j$ is modeled on the irreducible $D_n$-representation $\mathcal U_j$. 
Also, $U_0$ and $U_r$ are modeled on $\mathcal U_0$ and $\mathcal U_r$, respectively. 

\begin{remark}\label{rem:mathcalVc-decomposition} \rm (i) The complexification $\mathscr V^c$ of the space $\mathscr V:= \br^2\oplus \bc=\br^2\oplus (\br\oplus\br)$ can be represented as
\begin{equation}\label{eq:complexif-mathcalV}
\mathscr V^c = \bc^2 \oplus \Big(\bc\oplus \bc \Big)=\bc^4,
\end{equation} 
thus $V^c=(\mathscr V^c)^n = (\bc^4)^n$ for which decomposition \eqref{eq:Zn-decoposition} takes place.

(ii) Any complex $\mathcal H$-equivariant linear operator $A:V^c
 \to V^c$ is also $\bz_n$-equivariant, thus it preserves isotypical decomposition \eqref{eq:Zn-decoposition}. 
 
(iii) Clearly, the space $V^c$ admits a natural $S^1$-action induced by the complex multiplication. Put $\mathcal K:=\mathcal H\times S^1$.
Then (cf. \eqref{decomp'}), the $S^1$-action converts the (complex) $\mathcal H$-isotypical decomposition \eqref{eq:H-decomposition} 
into a (real) $\mathcal K$-isotypical decomposition
\begin{equation}\label{eq:H-decomposition-R}
V^c=U_{0,1}\oplus U_{1,1}\oplus\dots\oplus U_{r-1,1}\oplus U_{r,1}.
\end{equation}

(iv) By inspection, for $n = 8$ (our case study),  if $(H)$ is a {maximal} twisted orbit type 
in an {isotypical component} of $V^c$, then $(H)$ is a maximal twisted  orbit type in $V^c$ itself.

\end{remark}
%This action  converts the complex 
%$\mathcal H$-representation \eqref{eq:Zn-decoposition}  into a real $\mathcal H\times S^1$-representation.
%Put $\mathcal K:=\mathcal H\times S^1$. Then (cf. \eqref{decomp'}), the $\mathcal K$-isotypical decomposition \eqref{eq:H-decomposition} becomes a 
%(real) $\mathcal K$-isotypical decomposition
%\begin{equation}\label{eq:H-decomposition-R}
%V^c=U_{0,1}\oplus U_1\oplus\dots\oplus U_{r-1,1}\oplus U_{r,1}.
%\end{equation}
%\end{remark}

\paragraph{(b) $\mathcal H := D_n\times \{1\}$-isotypical decomposition of $V^c$.} 
One can explicitly  describe the $\mathcal H$-isotypical components of \eqref{eq:H-decomposition} as follows:
\[
U_0 =\{(z,z,\dots,z)^\top : z\in \bc^4   \},
\]
\[U_j = U_j^+\oplus U^-_j, \quad U^\pm_j  := \{(z,\xi^{\pm j} z,\dots,\xi^{\pm j(n-1)} z)^\top: z \in \mathbb C^4 \} \quad (0 < j < r),
\]
\[
U_r =\{(z,-z,z,-z,\dots,z,-z)^\top: z\in \bc^4\}.
\]
Further, one can easily verify that the coupling matrix $\mathscr C:V^c\to V^c$ given by \eqref{eq:C} preserves the $\mathcal H$-isotypical components. Put 
\begin{equation}\label{eq:rest-coupling}
\mathscr C^\pm_j := 
\mathscr C|_{U^\pm_j},\;\;  \mathscr C_0:= \mathscr C|_{U_0},\;\;  \mathscr C_r:= \mathscr C|_{U_r} \quad (0 < j < r).
\end{equation} 
Then,
\[
\mathscr C^\pm_j=\left[\begin{array}{cccc} 0&0&0&0\\0&0&0&0\\
0&0&2a_j\cos \psi&-2a_j\sin\psi\\
0&0&2a_j\sin \psi&2a_j \cos\psi\end{array}   \right], \quad a_j = {\rm Re}(\xi^{j})=\cos\frac{2\pi j}{n} \;\;\;\; (0 < j < r),
\]
%Moreover, $\mathscr C_0:U_0\to U_0$ is given by
\[
\mathscr C_0=\left[\begin{array}{cccc} 0&0&0&0\\0&0&0&0\\
0&0&2\cos \psi&-2\sin\psi\\
0&0&2\sin \psi&2 \cos\psi\end{array}   \right], \quad
\mathscr C_r=\left[\begin{array}{cccc} 0&0&0&0\\0&0&0&0\\
0&0&-2\cos \psi&2\sin\psi\\
0&0&-2\sin \psi&-2 \cos\psi\end{array}   \right].
\]
%\paragraph{Maximal twisted orbit types: only for $n = 8$:}
Finally, for $n = 8$, the list of maximal twisted types in the isotypical components of the $\mathcal H:=D_8 \times \{1\}$-representation  $V^c$
is as follows (see Appendix, %Section  \ref{app:1}, 
Subsection \ref{notation:1}, for the exact definition of the related twisted subgroups): 

\smallskip
(i)  for $U_{0,1}$: \quad $(\boldsymbol {D_8})$;

\smallskip

(ii) for $U_{1,1}$: \quad $(\boldsymbol{\mathbb Z_8^{t_1}}), (\boldsymbol{D_2^d}), (\boldsymbol{\widetilde{D}_2^d})$; 

\smallskip

(iii) for $U_{2,1}$: \quad $(\boldsymbol{\mathbb Z_8^{t_2}}), (\boldsymbol{D_4^d}), (\boldsymbol{\widetilde{D}_4^d})$; 

\smallskip

(iv)  for $U_{3,1}$: \quad $(\boldsymbol{\mathbb Z_8^{t_3}}), (\boldsymbol{D_2^d}), (\boldsymbol{\widetilde{D}_2^d})$; 

\smallskip

(v)  for $U_{4,1}$: \quad $(\boldsymbol{D_8^d})$.

\vs 
\paragraph{(c) $\mathcal H:=\bz_n^{t_l}$-isotypical decomposition of $V^c$.} For this group $\mathcal H$, the $\mathcal H$-isotypical components of 
\eqref{eq:H-decomposition} can be described as follows (cf. \eqref{eq:complexif-mathcalV}):
\[
U_0 = \mathscr U_0 \oplus \mathscr W_0,
\]
where
\[
\mathscr U_0 := \{(z,z,...,z)^\top \, : \, z \in \mathbb C^2\}
\]
and
\begin{align*}
\mathscr W_0&:= \left\{\left(\left[\begin{array}{c}z_1\\z_2\end{array}\right],\left[\begin{array}{c}\xi^l z_1\\ \xi^{-l} z_2\end{array}\right],...,\left[\begin{array}{c}\xi^{(n-1)l} z_1\\
\xi^{-(n-1)l} z_2\end{array}\right]\right)^\top\; : \; \left[\begin{array}{c}z_1\\ z_2\end{array}\right] \in \mathbb C \oplus \mathbb C\right\};
\end{align*}
\[
U^{\pm }_j:=\mathscr U^{\pm}_j\oplus \mathscr W^\pm_j \quad (0 < j < r),
\]
where 
\[
\mathscr U^\pm _j=\left\{(z,\xi^{\pm j} z,\xi^{\pm 2j}z, \dots,\xi^{\pm (n-1)j} z)^\top: z\in \bc^2   \right\}
\]
and
\[
\mathscr W^\pm_j := \left\{
\left(\left[\begin{array}{c} z_1\\ z_2\end{array}\right],\left[\begin{array}{c} \xi^{  \mp j+l} z_1\\ \xi^{ \mp j-l} z_2\end{array}\right],\dots,\left[\begin{array}{c}  \xi^{(n-1)(  \mp j+l)} z_1\\  \xi^{(n-1)(  \mp j-l)}z_2\end{array}\right]\right)^\top \; : \; \left[\begin{array}{c}z_1\\ z_2\end{array}\right] \in \mathbb C \oplus \mathbb C\right\};
\]
\medskip
%\begin{align*}
%& \mathscr W^\pm_j:=\\
%&\text{span}\Big\{\left(( z_1,z_2),( \xi^{-(  \pm j-l)} z_1, \xi^{-(  \pm j+l)} z_2),\dots, ( \xi^{-(n-1)(  \pm j-l)} z_1, \xi^{-(n-1)(  \pm j+l)}z_2)\right) :\\
% &\hskip1.5cm  (z_1,z_2)\in \bc\oplus \bc \Big\}
%  \\
%    &\mathscr U^\pm _j=\left\{(z,\xi^{\pm j} z,\xi^{\pm 2j}z, \dots,\xi^{\pm (n-1)j} z)^T: z\in \bc^2   \right\};
%\end{align*}
%\begin{align*}
\[U_r := \mathscr U_{r}\oplus \mathscr W_{r},\]
%\end{align*}
where
\[
 \mathscr U_{r}:= \{(z,-z,z,-z,...,z,-z)^\top \; : \; z \in \mathbb C^2\}
\]
and
\[
\mathscr W_{r}:= \left\{\left(\left[\begin{array}{c}z_1\\z_2\end{array}\right],\left[\begin{array}{c} -\xi^l z_1\\ -\xi^{-l} z_2\end{array}\right],\left[\begin{array}{c} \xi^{2l} z_1\\ \xi^{2l} z_2\end{array}\right], \dots, 
\left[\begin{array}{c}-\xi^{l(n-1)}z_1\\ -\xi^{-l(n-1)}z_2\end{array}\right]\right)^\top : 
\left[\begin{array}{c}z_1\\z_2\end{array}\right]\in \bc\oplus \bc \right\}.
\]
Under the same notations as in \eqref{eq:rest-coupling}, one has
%\paragraph{Repetitive Case $d>1$:}  
%Using the $\bz_n$-isotypical  decomposition \eqref{eq:iso-Zn}  the operator $\mathscr C_j$ has the following complex matrix:
\[
\mathscr C^{\pm}_j :=\left[\begin{array}{cccc} 0&0&0&0\\0&0&0&0\\
0&0&2a_{\pm j}\cos \psi&-2a_{\pm j}\sin\psi\\
0&0&2a_{\pm j}\sin \psi&2a_{\pm j} \cos\psi\end{array}   \right], \quad a_{\pm j}=\cos\frac{2\pi (\pm j-1)l}{n} \;\;\;\; (0 < j < r),
\]
\[
\mathscr C_0 :=\left[\begin{array}{cccc} 0&0&0&0\\0&0&0&0\\
0&0&2a_0\cos \psi&-2a_0\sin\psi\\
0&0&2a_0\sin \psi&2a_0 \cos\psi\end{array}   \right], \quad 
 \mathscr C_r :=\left[\begin{array}{cccc} 0&0&0&0\\0&0&0&0\\
0&0&2a_r\cos \psi&-2a_r\sin\psi\\
0&0&2a_r\sin \psi&2a_r \cos\psi\end{array}   \right], 
\]
where $a_0:= \cos{2\pi l \over n}$ and $a_r:= -\cos{2\pi l \over n}$.

For $n = 8$, one obtains the following list of maximal twisted types in the isotypical components of the $\mathcal H:=\bz_8^{t_l}\times \{1\}$-representation  
$V^c$, $l=1,2,3$ (see Appendix, Subsection \ref{notation:2},
%, Section  \ref{app:1}, Subsection \ref{notation:1}, 
for the definition of the related twisted subgroups): 

\smallskip
(i)  for $U_{0,1}$: \quad $(\boldsymbol{\bz_8})$;

\smallskip

(ii) for $U_{1,1}$: \quad $(\boldsymbol{\bz_8^{t_1}})$;

\smallskip

(iii) for $U_{2,1}$: \quad $(\boldsymbol{\bz_8^{t_2}})$;

\smallskip

(iv)  for $U_{3,1}$: \quad $(\boldsymbol{\bz_8^{t_3}})$;

\smallskip

(v)  for $U_{4,1}$: \quad $(\boldsymbol{\bz_8^c})$.

\vs

\paragraph{(d) $\mathcal H:=D_n^d$-isotypical decomposition of $V^c$.}
In this case, one can explicitly  describe the $\mathcal H$-isotypical components of \eqref{eq:H-decomposition} as follows:
%$\mathscr V^c$ has the following complex $\bz_n$-isotypical decomposition
%\begin{equation}\label{eq:iso-Zn}
%V^c=
%U_0\oplus U^+_1\oplus U^-_{1}\oplus \dots U^+_{r-1}\oplus U^-_{r-1}\oplus U_r,
%\end{equation}
\[
U_0 = \mathscr U_0 \oplus \mathscr W_0,
\]
where
\[
\mathscr U_0 := \{(z,z,...,z)^\top \, : \, z \in \mathbb C^2\}
\]
and
\[ 
\mathscr W_0:= \left\{\left(\left[\begin{array}{c}z_1\\z_2\end{array}\right],\left[\begin{array}{c}-z_1\\ -z_2\end{array}\right],\left[\begin{array}{c}z_1\\z_2\end{array}\right],..., \left[\begin{array}{c}- z_1\\ -z_2\end{array}\right]\right)^\top\; : \; \left[\begin{array}{c}z_1\\z_2\end{array}\right] \in \mathbb C \oplus \mathbb C\right\};
\]
\medskip
\[
U^\pm_j:=\mathscr U^\pm_{ j}\oplus \mathscr W^\pm _{j} \quad (0 < j < r),
\]
where
\[
\mathscr U^\pm _{j}=\left\{(z,\xi^{\pm j} z,\xi^{\pm 2j}z, \dots,\xi^{\pm (n-1)j} z)^\top: z\in \bc^2   \right\}
\]
and
\[
 \mathscr W^\pm_j := \left\{\left(\left[\begin{array}{c} z_1\\z_2\end{array}\right],\left[\begin{array}{c} -\xi^{ \mp j} z_1\\- \xi^{\mp j} z_2\end{array}\right],\dots, \left[\begin{array}{c}(-\xi^{ \mp j})^{n-1} z_1\\(-\xi^{\mp j})^{n-1}z_2\end{array}\right]\right)^\top: 
 \left[\begin{array}{c}z_1\\z_2\end{array}\right] \in \mathbb C \oplus \mathbb C\right\};
\]
\medskip
\begin{equation}
U_r:=\mathscr U_{r}\oplus \mathscr W_{r},
\end{equation}
where
\[
 \mathscr U_{r}:= \{(z,-z,z,-z,...,z,-z)^\top \; : \; z \in \mathbb C^2\}
\]
and
\[
\mathscr W_{r}:=  \left\{\left(\left[\begin{array}{c} z_1\\z_2\end{array}\right],\left[\begin{array}{c}z_1\\z_2\end{array}\right],...,\left[\begin{array}{c}z_1\\z_2\end{array}\right]\right)^\top: 
 \left[\begin{array}{c}z_1\\z_2\end{array}\right] \in \mathbb C \oplus \mathbb C\right\}.  
\]
Also,
\[
\mathscr C^{\pm}_j :=\left[\begin{array}{cccc} 0&0&0&0\\0&0&0&0\\
0&0&2a_{\pm j}\cos \psi&-2a_{\pm j}\sin\psi\\
0&0&2a_{\pm j}\sin \psi&2a_{\pm j} \cos\psi\end{array}   \right], \quad a_{\pm j} = -\cos\frac{2\pi j}{n} \;\;\;\; (0 < j < r),
\]
\[
\mathscr C_0 :=\left[\begin{array}{cccc} 0&0&0&0\\0&0&0&0\\
0&0&-2\cos \psi& 2\sin\psi\\
0&0&- 2\sin \psi & -2 \cos\psi\end{array}   \right], \quad 
 \mathscr C_r :=\left[\begin{array}{cccc} 0&0&0&0\\0&0&0&0\\
0&0& 2 \cos \psi &-2\sin\psi\\
0&0&2\sin \psi & 2 \cos\psi\end{array}   \right]. 
\]
%where $a_0:= \cos{2\pi l \over n}$ and $a_r:= -\cos{2\pi l \over n}$.
Hence, for $n = 8$, the list of maximal twisted types in the isotypical components of the $\mathcal H:=D_n^d$-representation  $V^c$
is (see Appendix, Subsection \ref{notation:3}, for the definition of the twisted subgroups): 

\smallskip
(i)  for $U_{0,1}$: \quad $(\boldsymbol {D_8})$;

\smallskip

(ii) for $U_{1,1}$: \quad $(\boldsymbol{\mathbb Z_8^{t_1}}), (\boldsymbol{D_2^d}), (\boldsymbol{\widetilde{D}_2^d})$; 

\smallskip

(iii) for $U_{2,1}$: \quad $(\boldsymbol{\mathbb Z_8^{t_2}}), (\boldsymbol{D_4^d}), (\boldsymbol{\widetilde{D}_4^d})$; 

\smallskip

(iv)  for $U_{3,1}$: \quad $(\boldsymbol{\mathbb Z_8^{t_3}}), (\boldsymbol{D_2^d}), (\boldsymbol{\widetilde{D}_2^d})$; 

\smallskip

(v)  for $U_{4,1}$: \quad $(\boldsymbol{D_8^d})$.
%%%%%%%%%%%%%%%%%%%
\vs

\subsubsection{Linearization on a relative equilibrium and characteristic quasi-polynomials}
\label{s3}

For any $\overline x^o = (g,q,a) \in \mathbb R\oplus \mathbb R \oplus \mathbb C \simeq \mathscr  V$, one has (cf. \eqref{eq:xi1}--\eqref{d} and 
\eqref{eq:ML-a}):
\begin{equation}\label{eq:map-in-mathcal-V}
\widetilde{\mathfrak f}(\alpha,i\omega,\overline{x}^o) =  
\begin{bmatrix}
\alpha - \gamma_g  g -\frac 1{E_g} e^{-q}(e^{g}-1)|a|^2 \\
q_0-\gamma_q q-\frac 1{E_q}\left( 1-e^{-q}  \right)|a|^2\\
-\gamma a + \gamma\sqrt \kappa \exp\left[\frac{(1-i  \eta_g)g - (1 - i  \eta_q)q} {2}\right] a e^{-i \omega T}
\end{bmatrix}.
\end{equation}
Take $\lambda \in \mathbb C$. Combining \eqref{eq:map-in-mathcal-V} with \eqref{eq:B-alpha}, \eqref{eq:R-alpha}  and \eqref{eq:R(0)} allows us to define a ``linearization operator"
$\mathcal R_{\alpha}^{\mathscr  V}(\lambda): \mathscr  V^c \to \mathscr  V^c$ by 
\begin{equation}\label{eq:linearization-on-rel-equib-mathcal-V}
\mathcal R_{\alpha}^{\mathscr  V}(\lambda): = 
\begin{bmatrix}
- \gamma_g  -\frac 1{E_g} e^{-q}e^{g}|a|^2  & {1 \over E_g} e^{-q}(e^g - 1) |a|^2 & -{2 \over E_g} e^{-q}(e^g -1)a \\
0 &  -\gamma_q - {1 \over E_q}e^{-q} |a|^2 &  -{2 \over E_q}(1 - e^{-q})a\\
B_{31}(\lambda) & B_{32}(\lambda) & B_{33}(\lambda)
\end{bmatrix},
\end{equation}
where 

$B_{31}(\lambda) = {\gamma\sqrt \kappa (1-i  \eta_g) \over 2}\exp\left[\frac{(1-i  \eta_g)g - (1-i  \eta_q)q}2\right]a e^{-i\omega T - \lambda T}$;

$B_{32}(\lambda) = -{\gamma\sqrt \kappa (1-i  \eta_q) \over 2}\exp\left[\frac{(1-i  \eta_g)g - (1-i  \eta_q)q}2\right]a e^{-i\omega T - \lambda T}$;

$B_{33}(\lambda) = -\gamma + \gamma\sqrt \kappa \exp\left[\frac{(1-i  \eta_g)g - (1-i  \eta_q)q}2\right]a e^{-i\omega T - \lambda T}$.
%here $A_{ij} \in \mathbb R$ and $B_{ij}(\lambda) \in \mathbb C$). 

\medskip\noindent
For any $\overline x = (\overline {x}^1,...,\overline {x}^n) \in V$, put 
\begin{equation}\label{eq:wild-f-o}
 \widetilde{f}_o(\alpha,i\omega,\overline{x}) := 
(\widetilde{\mathfrak f}(\alpha,i\omega,\overline{x}^1),...,\widetilde{\mathfrak f}(\alpha,i\omega,\overline{x}^n))^\top.
\end{equation}
For a given $\alpha$,  ${\bf S}(\overline x(\alpha))$ is a relative equilibrium for system \eqref{eq:f-Dn} corresponding to the frequency $\omega(\alpha)$ if and only if 
\begin{equation}\label{eq:rel-equilib-syst}
\Phi(\alpha,\omega(\alpha),\overline{x}(\alpha)) := \widetilde{f}_o(\alpha,i\omega(\alpha),\overline{x}(\alpha)) + \eta \mathscr C \overline{x}(\alpha) -  
\omega(\alpha)J \overline{x}(\alpha) = 0
\end{equation}
(cf. \eqref{eq:bif-HB}).
Assume that ${\bf S}(\overline{x}(\alpha))$ is a relative equilibrium with $\mathcal H := \mathcal G_{\overline{x}(\alpha)}$ of the form 
\eqref{eq:possible-symm-rel-eq} (cf. \eqref{eq:rel-eq-l} and condition (A4)). Take $ \mathcal R_{\alpha}(\lambda)$ determined by \eqref{eq:rel-equilib-syst}
and  \eqref{eq:B-alpha}--\eqref{eq:R-alpha} and  consider decompositions \eqref{eq:Zn-decoposition}-\eqref{eq:H-decomposition}.  
Then (cf. \eqref{eq:linearization-on-rel-equib-mathcal-V} and \eqref{eq:rel-equilib-syst}), one has:
 \begin{equation}\label{eq:R-alpha-restrict}
 \mathcal R_{\alpha}(\lambda)|_{\mathfrak U} = 
 \begin{cases}
 \mathcal R_{\alpha}^{\mathscr  V}(\lambda) +  \eta \mathscr C_0 \;\,\quad {\rm if}\;\; \mathfrak U = U_0;\\
 \mathcal R_{\alpha}^{\mathscr  V}(\lambda) + \eta \mathscr C^{\pm}_j \quad {\rm if}\;\; \mathfrak U = U^\pm_j \;\;\; (0 < j < r);\\
 \mathcal R_{\alpha}^{\mathscr  V}(\lambda) +  \eta \mathscr C_r \;\,\quad {\rm if}\;\; \mathfrak U = U_r.
 \end{cases}
 \end{equation}
We refer to Subsection \ref{subsub-isotypical}, where explicit formulas for  $\mathscr C_0$, $\mathscr C^{\pm}_j$ and  $\mathscr C_r$ are given
according to three possible values of $\mathcal H$. Combining \eqref{eq:R-alpha-restrict} with \eqref{charact'} and \eqref{barpi}, one can define the
characteristic quasi-polynomials $\overline{\mathcal P}_j(\alpha,\lambda)$, $j = 0,\pm 1,...,\pm (r-1), r$ and study Hopf bifurcation
of relative periodic solutions for different values of $\mathcal H = D_n \times \{1\}, D_n^d, \mathbb Z_n^{t_l}$.

\subsubsection{Condition (A3)}\label{s4} Suppose that equation \eqref{eq:f-Dn} with $n=8$
has a relative equilibrium $\bold S(\overline x)$, $\overline x = (g,q,a)$,  for some $\overline{\alpha}$
and $\overline{w}$. Without loss of generality, assume that $a \in \mathbb C$ is {\it real}.
Take decomposition \eqref{eq-D_n-decomp} and let us describe the restriction of matrix \eqref{xxx} to $\mathbb R \times W_j$. For any $j = 0,...,4$, define the operator
$\mathcal B_j = \mathcal B_j(\overline \alpha,\overline w,\overline x) : \mathcal V \to \mathcal V$ by 
\begin{equation}\label{eq:mathcalB-j}
\mathcal B_j :=  \mathcal R_{\overline{\alpha}}^{\mathscr  V}(0) +   \eta (\xi^j + \xi^{-j}) C  - 
\overline{w} J^{\mathcal V}.
% \quad (x = (g,q,a) \in \mathcal V \simeq \mathbb R^2 \oplus \mathbb C)
 \end{equation}
Here $ \mathcal R_{\overline{\alpha}}^{\mathscr  V}(0)$ is considered as a {\it real} linear operator in $\mathcal V \simeq \mathbb R^2 \oplus \mathbb C$ 
(cf. \eqref{eq:linearization-on-rel-equib-mathcal-V}) and  $J^{\mathcal V} : \mathcal V \to \mathcal V$ is given by $J^{\mathcal V}(\tilde{g},\tilde{q},\tilde{a})^{\top} = (0,0,i \tilde{a})^{\top}$; see also  \eqref{eq:matrix-C}  and
\eqref{eq:matrix-ksi}. 
Define a vector $\mathcal B = \mathcal B(\overline \alpha,\overline w,\overline x) \in \mathcal V \simeq  \mathbb R^2 \oplus \mathbb C$ by
\begin{equation}\label{eq:mathcalB}
\mathcal B := \Big\{0,0, -iT  \gamma\sqrt \kappa \exp\left[\frac{(1-i  \eta_g)g - (1 - i  \eta_q)q} {2}\right] a e^{-i \omega T} - i a \Big\}^{\top}.
\end{equation} 
Then (see \eqref{xxx}, \eqref{eq:map-in-mathcal-V}, \eqref{eq:mathcalB-j} and \eqref{eq:mathcalB}), 
\begin{equation}\label{eq:restrict-DPhi}
\Big[ D_w \Phi(\overline \alpha,\overline w,\overline x)\;|\;  D_x \Phi(\overline \alpha,\overline w,\overline x)\Big]_{\mathbb R \times W_j} = 
\begin{cases}
%\Big
[\mathcal  B    \;|\;   \mathcal B_j  
%\Big
]\ \quad\quad\quad\quad \quad\quad\quad  {\rm if} \; j = 0,4\\
 \left[\begin{array}{cc}
 %\Big
 [\mathcal  B    \;|\;   \mathcal B_j  
 %\Big
 ] & 0\\
 0 & 
 %\Big 
 [\mathcal  B    \;|\;   \mathcal B_j  
 %\Big
 ]\\
 \end{array}  \right] \quad {\rm if}\; j = 1,2,3. 
\end{cases}
\end{equation}
Put $\mathfrak B:=  [\mathcal  B    \;|\;   \mathcal B_j ]$. It follows from  \eqref{eq:restrict-DPhi} that condition (A3) is satisfied if 
${\rm rank}(\mathfrak B) = 4$. Note that $(0,0,i)^{\top}\in \mathbb R^2 \oplus \mathbb C$ is an eigenvector of 
$\mathcal B_j$ corresponding to the zero eigenvalue. Denote by $\mathcal {E}$ the direct sum of generalized eigenspaces corresponding to non-zero eigenvalues of $\mathcal B_j$. Clearly, ${\rm rank}(\mathfrak B) = 4$ if 

(a) ${\rm rank}(\mathcal B_j) = 3$ (i.e., zero is a simple eigenvalue of $\mathcal B_j$),  and 

(b) $\mathfrak B e \not\in \mathcal {E}$, where $e := (1,0,0,0,0) \in \mathbb R^5 \simeq \mathbb R \oplus \mathbb R^2 \oplus \mathbb C \simeq \mathbb R \oplus \mathcal V$. 

\begin{remark}\label{rem:A3} {\rm Condition (a) can be effectively expressed in terms of the derivative of the characteristic polynomial
associated with $\mathcal B_j$. %, while, keeping in mind \eqref{eq:mathcalB}. 
Condition (b) is satisfied if 

\medskip
(b)$^{\prime}$  ${\rm Im} \Big[ -iT  \gamma\sqrt \kappa \exp\left[\frac{(1-i  \eta_g)g - (1 - i  \eta_q)q} {2}\right] a e^{-i \omega T}\Big] - a \not=0$

\smallskip
\noindent
(recall, $a \in \mathbb R$).}
\end{remark}

%\vs
%\begin{definition}\rm
%A relative equilibrium
%$\bold S( \overline x)$ will be called {\it regular} if the kernel of the map given by the following block-matrix
%\begin{equation}\label{xxx}
%\Big[ D_w \Phi(\overline \alpha,\overline w,\overline x)\;|\;  D_x \Phi(\overline \alpha,\overline w,\overline x)\Big] :\br\times V \to V
%\end{equation}
%is {\it one-dimensional}.
%\end{definition}
%\vs
%We  make the following assumption:
%\begin{itemize}
%\item[(A3)]
%Equation \eqref{eq:fde1} has a regular relative equilibrium $\bold S(\overline x)$
%for some $\alpha=\overline\alpha$, $w=\overline w$.
%\end{itemize}

\subsubsection{Isotypical crossing}\label{s5}

In order apply Theorem  \ref{t1} to classify symmetries of relative periodic solutions bifurcating from relative equilibria $\bold{S}(\overline{x})$ with $(\mathcal G_{\overline x})$ given by \eqref{eq:possible-symm-rel-eq},  it remains to analyze the isotypical crossing of the roots of characteristic quasi-polynomials $\overline{\mathcal P}_j(\alpha,\lambda)$, $j = 0,\pm 1,...,\pm 3, 4$ (cf. \eqref{eq:R-alpha-restrict}, \eqref{charact'} and \eqref{barpi}), as 
$\alpha$ crosses some critical value $\alpha_o$. Numerical results illustrating isotypical crossing of characteristic roots through 
the imaginary axis are described in Table 2 for   $(\mathcal G_{\overline x}) = D_n \times \{1\}$, in Tables  3, 4, 5 for $(\mathcal G_{\overline x})=\mathbb Z_8^{t_1}, \mathbb Z_8^{t_2}, \mathbb Z_8^{t_3}$, respectively, and  in  Table 6 for $(\mathcal G_{\overline x}) = D_8^d$. All parameters except $\alpha$ are the same as in Section \ref{subsec:bif-rel-equil}. In these tables, we follow the same agreement as in Table \ref{zero:eq:tab} except that we use a circle to indicate a Hopf bifurcation point and a rectangle to indicate a  steady-state bifurcation. In particular, an entry in a given cell indicates the number of unstable roots 
for the characteristic quasi-polynomial $\overline{\mathcal P}_j(\alpha,\lambda)$ associated with the isotypical component $U_{j,1}$ (shown in the left column) for the corresponding interval of $\alpha$-valus (shown in the upper row). The results presented in Subsection \ref{s6} follow from these tables.

\begin{table}[ht!]
{\setlength{\tabcolsep}{0.5em} 
\centering
\caption{Number of unstable eigenvalues for each  isotypical component along the branch of the relative equilibrium with $(D_8)$ symmetry (see item (i) on page \pageref{pageI})}
{\small
\begin{tabular}{|c|c|c|c|c|c|c|c|c|c|}
\cline{3-10}
\multicolumn{2}{c|}{} &\multicolumn{8}{c|}{Intervals for values of parameter $\alpha\; \cdot 10^2$ } \\ \cline{3-10}
\multicolumn{2}{c|}{} & \scriptsize $ [3.61,\; 3.69]$ &\scriptsize $[3.70,\; 3.85]$ & \scriptsize$[3.86,\; 4.01]$\scriptsize &\scriptsize $[4.02,\; 4.58]$ &\scriptsize $[4.59,\; 5]$ & \scriptsize$[5.01,\; 5.32]$ &\scriptsize $[5.33,\; 6.01]$ &\scriptsize $[6.02,\; 8.97]$ \\ \hline 
\multirow{6}{*}{\rot{Isotypical component}}&$U_{0,1}$ & 0 & 0 & 0 & 0 & 0 & 0 & 0 & \circled{2} \\ \cline{2-10}
&$U_{1, 1}$ & 0 & \squared{2} & \circled{6} & 6 & 6 & 6 & 6 & 6  \\ \cline{2-10}
&$U_{2, 1}$ & 0 & 0 & 0 & \squared{2} & 2 & 2 & \circled{6} & 6 \\ \cline{2-10}
&$U_{3, 1}$ & 0 & 0 & 0 & 0 & \squared{2} & 2 & 2 & 2 \\ \cline{2-10}
&$U_{4, 1}$ & 0 & 0 & 0 & 0 & 0 & \squared{1} & 1 & 1  \\ \cline{2-10}
&$ \bigoplus\limits_{j=0}^4 U_{j, 1} $& 0 & 2 & 6 & 8 & 10 & 11 & 15 & 17\\ \hline
\end{tabular}
}}
\end{table}

%Similar results are presented for 
%$(\mathcal G_{\overline x}) = (\mathbb Z_n^{t_2}), (\mathbb Z_n^{t_3})$ in Tables 3 and 4, respectively.

\begin{table}[ht!]
{\setlength{\tabcolsep}{0.5em}
\centering
\caption{Number of unstable eigenvalues for each  isotypical component along the branch of the relative equilibrium with $(\mathbb Z_8^{t_1})$ symmetry (see item (ii) on page \pageref{pageI})}
{\small
\begin{tabular}{|c|c|c|c|c|c|c|c|c|c|}
\cline{3-10}
\multicolumn{2}{c|}{} &\multicolumn{8}{c|}{Intervals for values of parameter $\alpha \; \cdot 10^2$ } \\ \cline{3-10}
 \multicolumn{2}{c|}{} &\scriptsize $[3.61,\; 3.65]$ &\scriptsize $[3.66,\; 3.98]$ &\scriptsize $[3.99,\; 4.15]$ &\scriptsize $[4.16,\; 4.2]$ &\scriptsize $[4.21,\; 6.39]$ &\scriptsize $6.40$ &\scriptsize $[6.41,\; 7.87]$ &\scriptsize $[7.88,\; 10.02]$ \\ \hline
\multirow{6}{*}{\rot{Isotypical component}} &$U_{0,1}$  & 0 & 0 & 0 & 0 & 0 & 0 & 0 & \circled{2} \\ \cline{2-10}
&$U_{1,1}$  & 2 & \circled{4} & 4 & \circled{6} & 6 & 6 & 6 & 6  \\ \cline{2-10}
&$U_{2,1}$ & 0 & \circled{2} & 2 & 2 & 2 & 2 & \circled{4} & 4  \\ \cline{2-10}
&$U_{3,1}$  & 0 & 0 & \circled{2} & \circled{4} & 4 & 4 & 4 & 4 \\ \cline{2-10}
&$U_{4,1}$  & 0 & 0 & 0 & 0 & \squared{1} & \circled{3} & 3 & 3 \\ \cline{2-10}
&$\bigoplus\limits_{j=0}^4 U_{j, 1} $ & 2 & 6 & 8 & 12 & 13 & 15 & 17 & 19 \\ \hline
\end{tabular}
}}
\end{table}

\begin{table}[ht!]
{\setlength{\tabcolsep}{0.5em}
\begin{center}
\caption{Number of unstable eigenvalues for each  isotypical component along the branch of the relative equilibrium with $(\mathbb Z_8^{t_2})$ symmetry (see item (iii) on page \pageref{pageI})}
{\small
\begin{tabular}{|c|c|c|c|c|c|c|}
\cline{3-7}
\multicolumn{2}{c|}{} &\multicolumn{5}{c|}{Intervals for values of parameter  $\alpha \; \cdot 10^2$ } \\ \cline{3-7}
 \multicolumn{2}{c|}{} &\scriptsize $[3.61,\; 5.48]$ &\scriptsize $[5.49,\;6.6]$ &\scriptsize \scriptsize$[6.61,\; 8.09]$ &\scriptsize $[8.10,\; 8.47]$ &\scriptsize $[8.48,\; 13.53]$ \\ \hline 
\multirow{6}{*}{\rot{Isotypical component}}&$U_{0,1}$ & 0 & 0 & 0 & 0 & \circled{2} \\ \cline{2-7}
&$U_{1,1}$ & 2 & 2 & \circled{4} & 4 & 4 \\ \cline{2-7}
&$U_{2,1}$ & 2 & \circled{4} & 4 & \circled{6} & 6 \\ \cline{2-7}
&$U_{3,1}$ & 2 & 2 & \circled{4} & 4 & 4 \\ \cline{2-7}
&$U_{4,1}$& 0 & 0 & 0 & 0 & \circled{2} \\ \cline{2-7}
&$\bigoplus\limits_{j=0}^4 U_{j, 1} $& 6 & 8 & 12 & 14 & 18 \\ \hline
\end{tabular}
}
\end{center}
}
\end{table}

\begin{table}[ht!]
{\setlength{\tabcolsep}{0.5em}
\centering
\caption{Number of unstable eigenvalues for each  isotypical component along the branch of the relative equilibrium with $(\mathbb Z_8^{t_3})$ symmetry (see item (iv) on page \pageref{pageI})}

{\small
\begin{tabular}{|c|c|c|c|c|c|c|c|c|c|c|}
\cline{3-11}
\multicolumn{2}{c|}{} &\multicolumn{9}{c|}{Intervals for values of parameter $\alpha \; \cdot 10^2$} \\ \cline{3-11}
\multicolumn{2}{c|}{} &\scriptsize$[3.61,\; 3.68]$ &\scriptsize $[3.69,\;3.84]$ &\scriptsize $3.85$ &\scriptsize $[3.86,\; 5.24]$ &\scriptsize $[5.25,\; 5.29]$ &\scriptsize $[5.3,\; 5.52]$ &\scriptsize $[5.53,\; 6.37]$ &\scriptsize $[6.38,\;8.65]$ &\scriptsize $[8.66,\; 9.23]$ \\ \hline 
\multirow{6}{*}{\rot{Isotypical component}}&$U_{0,1}$ & 0 & 0 & 0 & 0 & 0 & 0 & 0 & 0 & \circled{2}  \\ \cline{2-11}
&$U_{1,1}$ & 2 & 2 & 2 & \circled{4} & 4 & 4 & \circled{6} & 6 & 6 \\ \cline{2-11}
&$U_{2,1}$ & 2 & \circled{4} & 4 & 4 & \circled{6} & 6 & 6 & 6 & 6  \\ \cline{2-11}
&$U_{3,1}$ & 4 & 4 & \circled{6} & 6 & 6 & 6 & 6 & \circled{8} & 8 \\ \cline{2-11}
&$U_{4,1}$ & 2 & 2 & 2 & 2 & 2 & \circled{4} & 4 & 4 & 4 \\ \cline{2-11}
&$\bigoplus\limits_{j=0}^4 U_{j, 1} $ & 10 & 12 & 14 & 16 & 18 & 20 & 22 & 24 & 26 \\ \hline
\end{tabular}
}}
\end{table}

\begin{table}[ht!]
{\setlength{\tabcolsep}{0.5em}
\begin{center}
\caption{Number of unstable eigenvalues for each  isotypical component along the branch of the relative equilibrium with $(D_8^d)$ symmetry (see item (vi) on page \pageref{pageI})}
{\small
\begin{tabular}{|c|c|c|c|c|c|c|c|c|}
\cline{3-9}
\multicolumn{2}{c|}{} &\multicolumn{7}{c|}{Intervals for values of parameter $\alpha \; \cdot 10^2$ } \\ \cline{3-9}
\multicolumn{2}{c|}{} &\scriptsize $[3.62,\; 3.83]$ &\scriptsize $[3.84,\; 4.04]$ &\scriptsize $[4.05,\; 5.38]$ &\scriptsize $[5.39,\; 6.59]$ &\scriptsize $[6.6,\;7.3]$ &\scriptsize $[7.31,\; 7.56]$ &\scriptsize $[7.57,\; 13.55]$ \\ \hline 
\multirow{6}{*}{\rot{Isotypical component}}&$U_{0,1}$ & 0 & 0 & \circled{2} & 2 & 2 & 2 & 2 \\ \cline{2-9}
&$U_{1,1}$ & 4 & 4 & 4 & 4 & 4 & 4 & \circled{8} \\ \cline{2-9}
&$U_{2,1}$ & 4 & \circled{8} & 8 & 8 & 8 & 8 & 8 \\ \cline{2-9}
&$U_{3,1}$ & 4 & 4 & 4 & \circled{8} & 8 & 8 & 8 \\ \cline{2-9}
&$U_{4,1}$ & 4 & 4 & 4 & 4 & \circled{6} & \circled{4} & 4 \\ \cline{2-9}
&$\bigoplus\limits_{j=0}^4 U_{j, 1} $ & 16 & 20 & 22 & 26 & 28 & 26 & 30 \\ \hline
\end{tabular}
}
\end{center}}
\end{table}

\section{Appendix}\label{app:1}

If $\mathfrak W$ is a $G$-representation, then for any function $x:S^1 \to \mathfrak W$, the spatio-temporal symmetry of $x$ is a group $\mathfrak H < G \times S^1$ such that $g\cdot x(t-\theta)=x(t)$ for any $t\in \mathbb R/2\pi\mathbb Z \simeq S^1$ and any $(g,e^{i\theta}) \in \mathfrak H$. If $x$ is  non-constant, then $\mathfrak H$ has the structure of a graph of a homomorphism $\varphi : H \to S^1$, where $H$ stands some subgroup of $G$. 
To emphasize this nature of the group $\mathfrak H$, 
the following notation is commonly used:
$$
H^\varphi := \{(h,\varphi(h)\;:\;h\in H)\}.
$$
The group  $H^\varphi$ is called a {\it twisted} symmetry group with twisting homomorpism $\varphi$. 

Relative periodic solutions of our interest have symmetry groups which are subgroups of $\Gamma \times S^1 \times S^1$. 
Such a subgroup can be characterized by two twisting homomorphisms $\varphi : K \to S^1$ and $\psi : K^{\varphi} \to S^1$ 
for some subgroup  $K < \Gamma$.  However, in order to simplify our notations, instead of writing $K^{\varphi,\psi}$, we used
the bold symbol $\bold{K}^{\varphi}$ to distinguish it from the group $K^{\varphi}$ used for twisted symmetries of periodic solutions.
\subsection{Notations used for the twisted subgroups of $\mathcal H:=D_8\times  S^1$}\label{notation:0}
The following symbols are used for the twisted subgroups of  $\mathcal K$: we put $ \xi:=e^{\frac{\pi i}{4}}$ and $\kappa:=\left[\begin{array}{cc}1&0\\0&-1   \end{array}  \right]$
and denote
\begin{align*}
D_8&:=\{(\xi^k,1): k=0,1,\dots,7\}\cup \{(\xi^k\kappa,1) : k=0,1,\dots,7\},\\
%{D _8^{z}}&:=\{(\xi^k,1): k=0,1,\dots,7\}\cup\{(\xi^k\kappa,-1): k=0,1,\dots,7\},\\
{D _8^{d}}&:=\{(\xi^k,(-1)^{k}: k=0,1,\dots,7\}\cup\{(\xi^k\kappa,(-1)^k): k=0,1,\dots,7\},\\
%{D _8^{\hat d}}&:=\{(\xi^k,(-1)^{k}: k=0,1,\dots,7\}\cup\{(\xi^{k+1}\kappa,(-1)^k): k=0,1,\dots,7\},\\
%D_4&:=\{(1,1),(i,1),(-1,1),(-i,1),(\kappa,1),(i\kappa,1),(-\kappa,1),(-i\kappa,1)\};\\
%D^z_4&:=\{(1,1),(i,1),(-1,1),(-i,1),(\kappa,-1),(i\kappa,-1),(-\kappa,-1),(-i\kappa,-1)\};\\
%\wt D_4&:=\{(1,1),(i,1),(-1,1),(-i,1),(\xi\kappa,1),(\xi i\kappa,1),(-\xi\kappa,1),(-\xi i\kappa,1)\};\\
%\wt D^z_4&:=\{(1,1),(i,1),(-1,1),(-i,1),(\xi\kappa,-1),(\xi i\kappa,-1),(-\xi\kappa,-1),(-\xi i\kappa,-1)\};\\
\wt D^d_4&:=\{(1,1),(i,-1),(-1,1),(-i,-1),(\xi\kappa,1),(\xi i\kappa,-1),(-\xi\kappa,1),(-\xi i\kappa,-1)\},\\
D^d_4&:=\{(1,1),(i,-1),(-1,1),(-i,-1),(\kappa,1),(i\kappa,-1),(-\kappa,1),(-i\kappa,-1)\},\\
%\wt D_4&:=\{(1,1),(i,1),(-1,1),(-i,1),(\xi\kappa,1),(\xi i\kappa,1),(-\xi\kappa,1),(-\xi i\kappa,1)\};\\
%D_2&:=\{(1,1),(-1,1),(\kappa,1),(-\kappa,1)\},\\
%D^z_2&:=\{(1,1),(-1,1),(\kappa,-1),(-\kappa,-1)\};\\
D^d_2&:=\{(1,1),(-1,-1),(\kappa,1),(-\kappa,-1)\},\\
\wt D_2^d&:=\{(1,1),(-1,-1),(\xi\kappa,1),(-\xi\kappa,-1)\},\\
%\wt D^z_2&:=\{(1,1),(-1,1),(\xi\kappa,-1),(-\xi\kappa,-1)\};\\
%\wt D_2&:=\{(1,1),(-1,1),(\xi\kappa,1),(-\xi\kappa,1)\};\\
%\wt D^z_2&:=\{(1,1),(-1,1),(\xi\kappa,-1),(-\xi\kappa,-1)\};\\
%\wt D_2&:=\{(1,1),(-1,-1),(\xi\kappa,1),(-\xi\kappa,-1)\};\\
%D_1&:=\{(1,1),(\kappa,1)\};\\
%D^z_1&:=\{(1,1),(\kappa,-1)\};\\
%\wt D_1&:=\{(1,1),(\xi \kappa,1)\};\\
%\wt D^z_1&:=\{(1,1),(\xi\kappa,-1)\};\\
%{\mathbb Z_8}&:=\{(\xi^k,1): k=0,1,\dots,7\};\\
{\mathbb Z_8^{t_1}}&:=\{(\xi^k,\xi^k: k=0,1,\dots,7\},\\ %\quad \xi:=e^{\frac{\pi i}{4}},\\
{\mathbb Z_8^{t_2}}&:=\{(\xi^k,\xi^{2k}): k=0,1,\dots,7\},\\
{\mathbb Z_8^{t_3}}&:=\{(\xi^k,\xi^{3k}): k=0,1,\dots,7\}.\\ 
%{\mathbb Z_8^{c}}&:=\{(\xi^k,(-1)^k): k=0,1,\dots,7\};\\
%\mathbb Z_4&:=\{(1,1),(i,1),(-1,1),(-i,1)\}\\
%\mathbb Z^t_4&:=\{(1,1),(i,i),(-1,-1),(-i,-i)\}\\
%\mathbb Z^c_4&:=\{(1,1),(i,-1),(-1,1),(-i,-1)\}\\
%\mathbb Z_2&:=\{(1,1),(-1,1)\}\\
%\mathbb Z^-_2&:=\{(1,1),(-1,-1)\}\\
%\mathbb Z_1&:=\{(1,1)\}
\end{align*}

\subsection{Notations used for the twisted subgroups of $\mathcal K:=D_8\times \{1\}\times S^1$}\label{notation:1}
The following symbols are used for the twisted subgroups of  $\mathcal K$:
\begin{align*}\boldsymbol{D_8}&:=D_8\times \{1\}\times \{1\},\\
\boldsymbol{\mathbb Z_8^{t_1}}&:=\{(\xi^k,1,\xi^k)\in \mathcal K: k=0,1,\dots,7\},\quad \xi:=e^{\frac{\pi i}{4}},\\
\boldsymbol {D_2^d}&:=\{(1,1,1),(-1,1,-1),(\kappa, 1, 1), (-\kappa,1,-1)\} ,  \\
\boldsymbol{\widetilde{D}_2^d}&:=\{(1,1,1),(-1,1,-1),(\xi \kappa, 1, 1), (-\xi\kappa,1,-1)\}  , \\
\boldsymbol{\mathbb Z_8^{t_2}}&:=\{(\xi^k,1,\xi^{2k})\in \mathcal K: k=0,1,\dots,7\},\\
\boldsymbol{D_4^d}&:=\{(1,1,1),(i,1,-1)(-1,1,1),(-i,1,-1),(\kappa, 1, 1), (i\kappa,1,-1),\\
&\hskip.8cm (-\kappa,1,1),(-i\kappa, 1,-1))\} ,  \\
\boldsymbol{\widetilde{D}_4^d}&:=\{(1,1,1),(i,1,-1)(-1,1,1),(-i,1,-1),(\xi\kappa, 1, 1), (i\xi\kappa,1,-1),\\
&\hskip.8cm (-\xi\kappa,1,1),(-i\xi\kappa, 1,-1))\} ,  \\
\boldsymbol{\mathbb Z_8^{t_3}}&:=\{(\xi^k,1,\xi^{3k})\in \mathcal K: k=0,1,\dots,7\},\\ 
\boldsymbol{D_8^d}&:=\{(\xi^k,1,(-1)^k), \, \{(\xi^k\kappa ,1,(-1)^k)\in \mathcal H: k=0,1,\dots,7\}.
\end{align*}

\subsection{Notations used for the twisted subgroups of $\mathcal K:=\bz_8^{t_l}\times S^1$, $l=1,2,3$}\label{notation:2}
In this case, the following symbols are used for the twisted subgroups of  $\mathcal K$:
\begin{align*}
\boldsymbol{\mathbb Z_8}&:=\bz_8^{t_l}\times \{1\},\\
\boldsymbol{\mathbb Z_8^{t_1}}&:=\{(\xi^k,\xi^{lk},\xi^k)\in \mathcal K: k=0,1,\dots,7\},\quad \xi:=e^{\frac{\pi i}{4}},\\
\boldsymbol{\mathbb Z_8^{t_2}}&:=\{(\xi^k,\xi^{lk},\xi^{2k})\in \mathcal K: k=0,1,\dots,7\},\\
\boldsymbol{\mathbb Z_8^{t_3}}&:=\{(\xi^k,\xi^{lk},\xi^{3k})\in \mathcal K: k=0,1,\dots,7\},\\ 
\boldsymbol{\mathbb Z_8^{c}}&:=\{(\xi^k,\xi^{lk},(-1)^k)\in \mathcal K: k=0,1,\dots,7\}.
\end{align*}

\subsection{Notations used for the twisted subgroups of $\mathcal K:=D^d_8\times S^1$}\label{notation:3}
For this group, the following symbols are used for the twisted subgroups of  $\mathcal K$:
\begin{align*}\boldsymbol {D_8}&:=D^d_8\times \{1\},\\
\boldsymbol{\mathbb Z_8^{t_1}}&:=\{(\xi^k,(-1)^k,\xi^k)\in \mathcal K: k=0,1,\dots,7\},\quad \xi:=e^{\frac{\pi i}{4}},\\
\boldsymbol {D_2^d}&:=\{(1,1,1),(-1,1,-1),(\kappa, 1, 1), (-\kappa,1,-1)\} ,  \\
\boldsymbol{\widetilde{D}_2^d}&:=\{(1,1,1),(-1,1,-1),(\xi \kappa, -1, 1), (-\xi\kappa,-1,-1)\}  , \\
\boldsymbol{\mathbb Z_8^{t_2}}&:=\{(\xi^k,(-1)^k,\xi^{2k})\in \mathcal K: k=0,1,\dots,7\},\\
\boldsymbol{D_4^d}&:=\{(1,1,1),(i,1,-1)(-1,1,1),(-i,1,-1),(\kappa, 1, 1), (i\kappa,1,-1),\\
&\hskip.8cm (-\kappa,1,1),(-i\kappa, 1,-1))\} ,  \\
\boldsymbol{\widetilde{D}_4^d}&:=\{(1,1,1),(i,1,-1)(-1,1,1),(-i,1,-1),(\xi\kappa, -1, 1), (i\xi\kappa,-1,-1),\\
&\hskip.8cm (-\xi\kappa,-1,1),(-i\xi\kappa, -1,-1))\} ,  \\
\boldsymbol{\mathbb Z_8^{t_3}}&:=\{(\xi^k,(-1)^k,\xi^{3k})\in \mathcal K: k=0,1,\dots,7\},\\ 
\boldsymbol{D_8^d}&:=\{(\xi^k,(-1)^k,(-1)^k), \, \{(\xi^k\kappa ,(-1)^k,(-1)^k)\in \mathcal H: k=0,1,\dots,7\}.
\end{align*}

%\section{Conclusions}
%...
%In particular, we rigorously prove the occurrence of infinitely many branches of relative equilibria with various symmetric properties...

\section*{Acknowledgments}

The authors thank Andrei Vladimirov for the discussion of the laser model. The support of NSF through grant DMS-1413223 is greatfully acknowledged.

\end{document}